\newtheorem{theorem}{Theorem}
\newtheorem{definition}{Definition}
\newtheorem{lemma}{Lemma}
\newtheorem{proposition}{Proposition}
\title{The Unbalanced Gromov Wasserstein Distance: Conic Formulation and Relaxation}
\author{%
  Thibault Séjourné\\
  Ecole Normale Supérieure, DMA, PSL\\
  \texttt{thibault.sejourne@ens.fr} \\
   \And
   François-Xavier Vialard \\
   Université Gustave Eiffel\\
   \texttt{francois-xavier.vialard@u-pem.fr} \\
   \And
   Gabriel Peyré \\
   Ecole Normale Supérieure, DMA, CNRS, PSL \\
   \texttt{gabriel.peyre@ens.fr} \\
}
\begin{document}

\maketitle


\begin{abstract}
  Comparing metric measure spaces (i.e. a metric space endowed with a probability distribution) is at the heart of many machine learning problems.
  The most popular distance between such metric measure spaces is the Gromov-Wasserstein (GW) distance, which is the solution of a quadratic assignment problem.
  The GW distance is however limited to the comparison of metric measure spaces endowed with a \emph{probability} distribution.
  To alleviate this issue, we introduce two Unbalanced Gromov-Wasserstein formulations: a distance and a more tractable upper-bounding relaxation.
  They both allow the comparison of metric spaces equipped with arbitrary positive measures up to isometries.
  The first formulation is a positive and definite divergence based on a relaxation of the mass conservation constraint using a novel type of quadratically-homogeneous divergence.
  This divergence works hand in hand with the entropic regularization approach which is popular to solve large scale optimal transport problems. We show that the underlying non-convex optimization problem can be efficiently tackled using a highly parallelizable and GPU-friendly iterative scheme.
  The second formulation is a distance between mm-spaces up to isometries based on a conic lifting.
  Lastly, we provide numerical experiments on synthetic examples and domain adaptation data with a Positive-Unlabeled learning task to highlight the salient features of the unbalanced divergence and its potential applications in ML.
\end{abstract}


\section{Introduction}

Comparing  data distributions on different metric spaces is a basic problem in machine learning. 
This class of problems is for instance at the heart of surfaces~\citep{bronstein2006generalized} or graph matching~\citep{xu2019scalable} (equipping the surface or graph with its associated geodesic distance), regression problems in quantum chemistry~\citep{gilmer2017neural} (viewing the molecules as distributions of points in $\RR^3$) and natural language processing~\citep{grave2019unsupervised,alvarez2018gromov} (where texts in different languages are embedded as points distributions in different vector spaces).

\paragraph{Metric measure spaces.}

The mathematical way to formalize these problems is to model the data as \emph{metric measure spaces} (mm-spaces). 
A mm-space is denoted as $\Xx=(X, d, \mu)$ where $X$ is a complete separable set endowed with a distance $d$ and a positive Borel measure $\mu \in \Mm_+(X)$. 
For instance, if $X=(x_i)_i$ is a finite set of points, then $\mu=\sum_i m_i \de_{x_i}$ (here $\de_{x_i}$ is the Dirac mass at $x_i$) is simply a set of positive weights $m_i = \mu(\{x_i\}) \geq 0$ associated to each point $x_i$, which accounts for its mass or importance. For instance, setting some $m_i$ to $0$ is equivalent to removing the point $x_i$.
We refer to~\citet{sturm2012space} for a mathematical account on the theory of mm-spaces.
In all the applications highlighted above, it makes sense to perform the comparisons up to isometric transformations of the data.
Two mm-spaces $\Xx=(X, d_X, \mu)$ and $\Yy=(Y, d_Y, \nu)$ are considered to be equal (denoted $\Xx \sim \Yy$) if they are isometric, meaning that there is a bijection $\psi : \spt(\mu) \rightarrow \spt(\nu)$ (where $\spt(\mu)$ is the support of $\mu$) such that $d_X(x,y) = d_Y(\psi(x),\psi(y))$ and $\psi_\sharp \mu=\nu$. Here $\psi_\sharp$ is the push-forward operator, so that $\psi_\sharp \mu=\nu$ is equivalent to imposing $\nu(A)=\mu(\psi^{-1}(A))$ for any set $A \subset Y$. For discrete spaces where $\mu = \sum_i m_i \de_{x_i}$, then one should have $\nu=\psi_\sharp \mu=\sum_i m_i \de_{\psi(x_i)}$.
As highlighted by \citet{memoli2011gromov}, considering mm-spaces up to isometry is a powerful way to formalize and analyze a wide variety of problems such as matching, regression and classification of distributions of points belonging to different spaces.
Most often, the objects of interest come with a natural distance such as an intrinsic or extrinsic distance and the uniform measure is the usual choice to make mm-spaces widely applicable.
The key to unlock all these  problems is the computation of a distance between mm-spaces up to isometry. So far, existing distances (reviewed below) assume that $\mu$ is a probability distribution, i.e. $\mu(X)=1$. This constraint is not natural and sometimes problematic for most of the practical applications to machine learning. The goal of this paper is to alleviate this restriction. 
We define for the first time a class of distances between unbalanced metric measure spaces,  these distances being upper-bounded by divergences which can be approximated by an efficient numerical scheme.

\paragraph{Csisz\'ar divergences}

The simplest case is when $X=Y$ and one simply ignores the underlying metric. 
One can then use  Csisz\'ar divergences (or $\phi$-divergences), which perform a pointwise comparison (in contrast with optimal transport distances, which perform a displacement comparison). It is defined using an entropy function $\phi: \RR_+ \rightarrow [0, +\infty]$, which is a convex, lo\-wer se\-mi-con\-tinu\-ous, positive function with $\phi(1)=0$. 
The Csisz\'ar $\phi$-divergence reads
$\D_\phi(\mu|\nu) \eqdef \int_X \phi\big(\frac{\d\mu}{\d\nu}\big)\d\nu + \phi^\prime_\infty\int_X \d\mu^\bot$, where $\mu = \frac{\d\mu}{\d\nu} \nu + \mu^\bot$ is called the Radon-Nikodym or the Lebesgue decomposition of $\mu$ with respect to $\nu$
and $\phi^\prime_\infty = \lim_{r \rightarrow \infty}\phi(r) / r \in \RR \cup \{+\infty\}$ is called the recession constant.
%
This divergence $\D_\phi$ is convex, positive, 1-homogeneous and weak* lower-semi\-con\-tinu\-ous, see~\citet{liero2015optimal} for details.
%
%
Particular instances of $\phi$-divergences are Kullback-Leibler ($\KL$) for $\phi(r)=r\log(r)-r+1$ (note that $\phi^\prime_\infty=\infty$) and Total Variation ($\TV$) for $\phi(r)=|r-1|$. 
%
%

\paragraph{Balanced and unbalanced optimal transport.}

If the common embedding space $X$ is equipped with a distance $d(x,y)$, one can use more elaborated methods such as optimal transport (OT) distances, which are computed by solving convex optimization problems. 
This type of methods has proven useful for ML problems as diverse as domain adaptation~\citep{courty2014domain}, supervised learning over histograms~\citep{frogner2015learning} and unsupervised learning of generative models~\citep{WassersteinGAN}. 
In this case, the extension from probability distributions to arbitrary positive measures $(\mu,\nu) \in \Mm_+(X)^2$ is now well understood and corresponds to the theory of unbalanced OT. 
Following~\citet{liero2015optimal,chizat2018unbalanced}, a family of unbalanced Wasserstein distances is defined by solving 
\begin{align}\label{eq-uw}
	\text{UW}(\mu,\nu)^q \eqdef \uinf{\pi \in \Mm(X \times X)} &\int \C(d(x,y)) \d\pi(x,y) + \D_\phi(\pi_1|\mu) + \D_\phi(\pi_2|\mu).
\end{align}
Here $(\pi_1,\pi_2)$ are the two marginals of the joint distribution $\pi$, defined by $\pi_1(A) = \pi(A \times Y)$ for $A \subset X$. 
The mapping $\la : \RR^+ \rightarrow \RR$ and exponent $q\geq 1$ should be chosen wisely to ensure for instance that \text{UW} defines a distance (see Section~\ref{sec-setup-dist}).
It is frequent to take $\rho\D_\phi$ instead of $\D_\phi$ (i.e. take $\psi=\rho\phi$) to adjust the strength of the marginals' penalization.
Balanced OT is retrieved with the convex indicator $\phi=\iota_{\{1\}}$ (i.e. $\phi(1)=0$ and $\phi(x)=+\infty$ otherwise) or by taking the limit $\rho \rightarrow +\infty$, which enforces $\pi_1=\mu$ and $\pi_2=\nu$.
%
%
When $0 < \rho < +\infty$, unbalanced OT operates a trade-off  between transportation and creation of mass, which is crucial to be robust to outliers in the data and to cope with mass variations in the modes of the distributions. 
For supervised tasks, the value of $\rho$ should be cross-validated to obtain the best performances. 
Its use is gaining popularity in applications, such as 
medical imaging registration~\citep{feydy2019fast}, videos~\citep{lee2019parallel}, generative learning~\citep{balaji2020robust} and gradient flow to train neural networks~\citep{chizat2018global,rotskoff2019global}.
Furthermore, existing efficient algorithms for balanced OT extend to this unbalanced problem. In particular Sinkhorn's iterations, introduced in ML for balanced OT by~\citet{CuturiSinkhorn}, extend to unbalanced OT~\citep{chizat2016scaling, sejourne2019sinkhorn}, as detailed in Section~\ref{sec-algo}.

\paragraph{The Gromov-Wasserstein distance and its applications.}

The Gromov-Wasserstein ($\GW$) distance~\citep{memoli2011gromov,sturm2012space} generalizes the notion of OT to the setting of mm-spaces up to isometries. It replaces the linear cost $\int \C(d) \d \pi$ of OT by a quadratic function. It reads 
\begin{align}\label{eq-defn-gw}
	&\text{GW}(\Xx,\Yy)^q \eqdef \!\!\!\!
	\umin{\pi \in \Mm_+(X \times Y)} \enscond{
		\int \C( |d_X(x, x') -  d_Y(y, y')| ) \d\pi(x,y) \d\pi(x',y')
	}{ 
		\begin{matrix}\pi_1=\mu \\ \pi_2=\nu\end{matrix}		
	}. 
\end{align}
%
It is proved in~\citet{memoli2011gromov,sturm2012space} that $\GW$ defines with $\C(t)=t^q$ a distance up to isometries on balanced mm-spaces (i.e. the measures are probability distributions).
%
%
The $\GW$ distance is applied successfully in natural language processing for unsupervised translation learning~\citep{grave2019unsupervised, alvarez2018gromov}, in generative learning for objects lying in spaces of different dimensions~\citep{bunne2019learning} and to build VAE for graphs~\citep{xu2020learning}. It has been adapted for domain adaptation over different spaces~\citep{redko2020co}. It is also a relevant distance to compute barycenters between graphs or shapes ~\citep{vayer2018fused,chowdhury2020gromov}.
When $(\Xx,\Yy)$ are Euclidean spaces, this distance compares distributions up to rigid isometry, and is closely related (but not equal) to metrics defined by procrustes analysis~\citep{grave2019unsupervised, alvarez2019towards}. 
The problem~\eqref{eq-defn-gw} is non convex because the quadratic form $\int \C(|d_X - d_Y|) \d\pi\otimes \pi$ is not positive in general. 
It is in fact closely related to quadratic assignment problems~\citep{burkard1998quadratic}, which are used for graph matching problems, and are known to be NP-hard in general. 
Nevertheless, non-convex optimization methods have been shown to be successful in practice to use $\GW$ distances for ML problems. This includes for instance alternating minimization~\citep{memoli2011gromov,redko2020co} and entropic regularization~\citep{peyre2016gromov, gold1996graduated}.

\paragraph{Related works and contributions.}
The concomitant work of~\citet{de2020entropy} extends the $L^p$ transportation distance defined in~\citet{sturm2006geometry} to unbalanced mm-spaces and studies its geometric properties.
This distortion distance is not equivalent to the $\GW$ distance, and is more difficult to estimate numerically because it explicitly imposes a triangle inequality constraint in the optimization problem. 
The work of~\citet{chapel2020partial} relaxes the $\GW$ distance to the unbalanced setting by hybridizing $\GW$ with partial OT~\citep{figalli2010optimal} for unsupervised labeling. It ressembles one particular setting of our formulation, but with some important differences, detailed in Section~\ref{sec-distance}.
Our construction is also connected to partial matching methods, which find numerous applications in graphics and vision~\citep{cosmo2016shrec}. In particular, \citet{rodola2012game} introduces a mass conservation relaxation of the $\GW$ problem. 

The two main contributions of this paper are the definition of two formulations relaxing the $\GW$ distance. The first one is called the Unbalanced Gromov-Wasserstein ($\UGW$) divergence and can be computed efficiently on GPUs. 
The second one is called the Conic Gromov-Wasserstein distance ($\CGW$). 
It is proved to be a distance between mm-spaces endowed with positive measures up to isometries, as stated in Theorem~\ref{thm-ugw-dist} which is the main theoretical result of this paper. We also prove in Theorem~\ref{thm-ugw-dist} that $\UGW$ can be used as a surrogate upper-bounding $\CGW$.  
We present those concepts and their properties in Section~\ref{sec-distance}.
We also detail in Section~\ref{sec-algo} an efficient computational scheme for a particular setting of $\UGW$. This method computes an approximate stationary point of a biconvex relaxation of our formulations. Even though it is a lower bound of the original problem, we provide in Theorem~\ref{ThTightLowerBound} conditions ensuring the tightness of this relaxation in many cases of interest. The algorithm leverages the strength of entropic regularization and the Sinkhorn algorithm, namely that it is GPU-friendly and defines smooth loss functions amenable to back-propagation for ML applications.
Section~\ref{sec-xp} provides some numerical experiments to highlight the qualitative behavior of this algorithm and its ability to cope with outliers and mass variations in the modes of the distributions.
We illustrate numerically the tightness of the relation between $\UGW$ and $\CGW$, showing that $\UGW$ is a reasonnable proxy of a distance, at least locally.
We provide an application of our divergence in the positive unlabeled learning setting, using domain adaptation data, and display results which are at par or outperform the computable competitor~\citet{chapel2020partial}.


\section{Unbalanced Gromov-Wasserstein formulations}
\label{sec-distance}

We present in this section our two new formulations and their properties. The first one, called UGW, is exploited in Sections~\ref{sec-algo} and~\ref{sec-xp} to derive an efficient algorithm used in numerical experiments. The second one, called CGW, defines a distance between mm-spaces up to isometries. Those results build upon the work of~\citet{liero2015optimal}, and a summary of the construction of UOT is detailed in Appendix~\ref{sec-app-background}.
In all what follows, we consider complete separable mm-spaces endowed with a metric and a positive measure.

\subsection{The unbalanced Gromov-Wasserstein divergence}

This new formulation makes use of quadratic $\phi$-divergences, defined as $\D_\phi^\otimes(\rho|\nu) \eqdef \D_\phi(\rho \otimes \rho|\nu \otimes \nu)$, where $\rho \otimes \rho \in \Mm_+(X^2)$ is the tensor product measure defined by $\d(\rho \otimes \rho)(x,y)=\d\rho(x)\d\rho(y)$.
Note that $\D_\phi^\otimes$ is not a convex function in general.

\begin{definition}[Unbalanced GW]
The Unbalanced Gromov-Wasserstein divergence is defined as $\UGW(\Xx, \Yy) = \inf_{\pi\in\Mmp(X\times Y)} \Ll(\pi)\eqdef \Gg(\pi)
   + \D_\phi^\otimes(\pi_1|\mu) + \D_\phi^\otimes(\pi_2|\nu)$.
\end{definition}

This definition can be understood as an hybridation between~\eqref{eq-uw} and~\eqref{eq-defn-gw} but with a twist: one needs to use the quadratic divergence $\D_\phi^\otimes$ in place of $\D_\phi$. To the best of our knowledge, it is the first time such quadratic divergences are being used and studied.
In the TV case, this is the most important distinction between $\UGW$ and partial-$\GW$~\citep{chapel2020partial}.
Note also that the balanced $\GW$ distance~\eqref{eq-defn-gw}  is recovered as a particular case when using $\phi=\iota_{\{1\}}$ or by letting $\rho \rightarrow +\infty$ for an entropy $\psi=\rho\phi$.

Using quadratic divergences results in UGW being 2-homogeneous: 
%
for $\th\geq 0$, writing $(\Xx_\th, \Yy_\th)$ equiped with $(\th\mu,\th\nu)$, one has $\th^{-2}\UGW(\Xx_\th, \Yy_\th)=\UGW(\Xx, \Yy)$.
%
%
When using non tensorized $\phi$-divergences, the resulting unbalanced Gromov-Wassertein functional between $\Xx_\th$ and $\Yy_\th$ have very different and inconsistent behaviors when $\theta\rightarrow 0$ and $\theta\rightarrow +\infty$. Indeed, once normalized by $\th^{-2}$ and $\th^{-1}$, one obtains respectively balanced $\GW$ and a Hellinger-type distance. Using tensorized divergences ensures that the behavior does not depends on $\th$. It is also fundamental to connect $\UGW$ with our distance $\CGW$, see Theorem~\ref{thm-ugw-dist} and Appendix~\ref{appendix-distance-ugw}.
%

We first prove the existence of optimal plans $\pi$ minimizing $\Ll$, which holds for the three key settings of Section~\ref{sec-setup-dist}, namely for $\KL$, $\TV$, and for compact metric spaces (such as finite pointclouds and graphs). All proofs are deferred in Appendix~\ref{appendix-distance-ugw}.

\begin{proposition}[Existence of minimizers]\label{thm-exist-minimizer}
	We assume that $(X,Y)$ are compact and that either (i) $\phi$ superlinear, i.e $\phi^\prime_\infty=\infty$, or (ii) $\C$ has compact sublevel sets in $\RR_+$ and $2\phi^\prime_\infty + \inf \C >0$.
	Then there exists $\pi\in\Mm_+(X\times Y)$ such that $\UGW(\Xx,\Yy)=\Ll(\pi)$.
\end{proposition}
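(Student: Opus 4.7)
My plan is to apply the direct method of the calculus of variations. First, the infimum is finite—for example, the zero plan $\pi = 0$ gives $\Ll(0) = \phi(0)(\mu(X)^2 + \nu(Y)^2) < +\infty$ in each of the settings of Section~\ref{sec-setups}. Taking a minimizing sequence $(\pi_n) \subset \Mm_+(X \times Y)$, I would establish three things: (a) uniform boundedness of the total masses $M_n \eqdef \pi_n(X\times Y)$; (b) weak$^*$ compactness via Prokhorov to extract a limit $\pi^\star$; (c) weak$^*$ lower semicontinuity of each of the three terms in $\Ll$.

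Step (a) is the main obstacle, since the quadratic divergence $\D_\phi^\otimes$ is not convex and the mass of $\pi_n$ is not pinned down by marginal constraints as in balanced GW. The tool I would use is Fenchel duality: for any $s \in (0, \phi^\prime_\infty)$ with $\phi^*(s) < +\infty$ one has $\phi(r) \geq sr - \phi^*(s)$, which integrates to
\begin{equation*}
\D_\phi^\otimes(\pi_{n,1}|\mu) \geq s\, M_n^2 - \phi^*(s)\, \mu(X)^2,
\end{equation*}
and symmetrically for $\pi_{n,2}$ and $\nu$. Since $\int \C(\Gamma)\, \d\pi_n \otimes \d\pi_n \geq (\inf \C)\, M_n^2$, combining these estimates yields
\begin{equation*}
\Ll(\pi_n) \geq (\inf \C + 2s)\, M_n^2 - \phi^*(s)\,\bigl(\mu(X)^2 + \nu(Y)^2\bigr).
\end{equation*}
Under hypothesis (ii) the assumption $2\phi^\prime_\infty + \inf \C > 0$ lets me pick $s$ close enough to $\phi^\prime_\infty$ so that the $M_n^2$ coefficient is strictly positive, forcing $M_n$ to be bounded. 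Under hypothesis (i) $\phi^\prime_\infty = +\infty$, and superlinearity of $\phi$ ensures $\phi^*(s) < +\infty$ for every $s > 0$, so $s$ can be chosen arbitrarily large and the same conclusion holds irrespective of the sign of $\inf \C$.

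With mass-boundedness in hand, step (b) is Prokhorov's theorem on the compact space $X \times Y$, which provides a weak$^*$ convergent subsequence $\pi_{n_k} \rightharpoonup \pi^\star \in \Mm_+(X\times Y)$. For (c), the map $\pi \mapsto \pi \otimes \pi$ is weak$^*$ continuous on mass-bounded sets (via Stone-Weierstrass applied to tensor products of continuous test functions on $X\times Y$), so the cost term is weak$^*$ continuous—here I use that $\C \circ \Gamma$ is continuous and bounded on the compact $X^2 \times Y^2$—and each quadratic divergence $\D_\phi^\otimes(\cdot|\mu) = \D_\phi(\cdot \otimes \cdot|\mu\otimes \mu)$ inherits weak$^*$ lower semicontinuity from the classical lsc of $\D_\phi$ in both arguments~\cite{liero2015optimal}. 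Passing to the $\liminf$ gives $\Ll(\pi^\star) \leq \liminf_k \Ll(\pi_{n_k}) = \UGW(\Xx,\Yy)$, so $\pi^\star$ attains the infimum.
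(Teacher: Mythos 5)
Your proposal is correct and follows the same overall scheme as the paper's proof in Appendix~B (direct method: bound the total mass, extract a weak$^*$ limit by compactness of $X\times Y$, conclude by lower semicontinuity of $\Ll$). The one genuine difference is the mechanism used to bound $M_n = \pi_n(X\times Y)$. The paper applies Jensen's inequality to the convex integrand $\phi$ with respect to the normalized reference measure, which directly yields
$\Ll(\pi) \geq m(\pi)^2\bigl[\inf\C + \tfrac{m(\mu)^2}{m(\pi)^2}\phi(\tfrac{m(\pi)^2}{m(\mu)^2}) + \tfrac{m(\nu)^2}{m(\pi)^2}\phi(\tfrac{m(\pi)^2}{m(\nu)^2})\bigr]$,
whose bracket converges to $2\phi'_\infty + \inf\C > 0$ as $m(\pi)\to\infty$, giving coercivity in one shot. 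You instead use Fenchel duality, $\phi(r)\geq sr - \phi^*(s)$ together with $\phi'_\infty \geq s$ to control the singular part, obtaining the affine-in-$M_n^2$ lower bound $(\inf\C + 2s)M_n^2 - \phi^*(s)(\mu(X)^2+\nu(Y)^2)$ and then choosing $s$ close to $\phi'_\infty$ (resp.\ arbitrarily large when $\phi'_\infty=\infty$). The two routes are algebraically close—both exploit convexity of $\phi$—but yours makes the case split (i)/(ii) more transparent via the choice of $s$, whereas the Jensen route is more compact. You are also more explicit on the lower-semicontinuity step, justifying weak$^*$ continuity of $\pi\mapsto\pi\otimes\pi$ on mass-bounded sets via Stone--Weierstrass before invoking l.s.c.\ of $\D_\phi$; the paper merely asserts that $\Ll$ is a sum of l.s.c.\ terms. (One small caveat: your opening claim that $\Ll(0)=\phi(0)(\mu(X)^2+\nu(Y)^2)<\infty$ uses $\phi(0)<\infty$, which holds in the three settings of Section~\ref{sec-setups} but is not forced by the stated hypotheses; however, if the infimum is $+\infty$ the conclusion is vacuous, so this does not affect correctness.)
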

The following proposition ensures that the functional $\UGW$ can be used to compare mm-spaces.
\begin{proposition}[Definiteness of $\UGW$]\label{prop-ugw-definite}
Assume that $\phi^{-1}(\{0\})=\{1\}$ and $\C^{-1}(\{0\})=\{0\}$. Then $\UGW(\Xx,\Yy) \geq 0$ and is $0$ if and only if $\Xx\sim\Yy$.
\end{proposition}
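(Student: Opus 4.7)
The statement splits into three parts: non-negativity, the direction $\Xx\sim\Yy\Rightarrow\UGW(\Xx,\Yy)=0$, and its converse. Non-negativity is immediate since $\Ll(\pi)$ is a sum of three non-negative quantities, namely $\int\C(\Gamma)\d\pi\d\pi\geq 0$ together with the two Csisz\'ar divergence terms.

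For the forward direction I would exhibit an explicit zero-cost plan. Given an isometry $\psi:spt(\mu)\to spt(\nu)$, set $\pi\eqdef(\mathrm{id},\psi)_\sharp\mu$. Its marginals are $\pi_1=\mu$ and $\pi_2=\psi_\sharp\mu=\nu$, so both $\D_\phi^\otimes$ terms vanish. On $spt(\pi)^2$ the distortion evaluates as $\Delta(d_X(x,x'),d_Y(\psi(x),\psi(x')))=\Delta(d_X(x,x'),d_X(x,x'))=0$, and then $\C(0)=0$ by the assumption $\C^{-1}(\{0\})=\{0\}$. Hence $\Ll(\pi)=0$.

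For the converse I would apply Proposition~\ref{thm-exist-minimizer} to pick a minimizer $\pi$ achieving $\Ll(\pi)=0$, which forces each of the three summands to vanish individually. The key intermediate claim is that $\D_\phi^\otimes(\pi_1|\mu)=0$ implies $\pi_1=\mu$. I would first argue that the hypotheses ($\phi$ convex with $\phi(1)=0$ and $\phi^{-1}(\{0\})=\{1\}$) force $\phi^\prime_\infty>0$: choosing any $r>1$ with $\phi(r)>0$, convexity yields a linear lower bound $\phi(s)/s\geq\phi(r)(s-1)/(s(r-1))$ for $s>r$, whose limit is strictly positive. Consequently $\D_\phi(\pi_1\otimes\pi_1|\mu\otimes\mu)=0$ forces the singular part to vanish and the density to equal $1$ on $spt(\mu\otimes\mu)$, giving $\pi_1\otimes\pi_1=\mu\otimes\mu$. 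Marginalizing and comparing total masses then yields $\pi_1=\mu$ (assuming $\mu\neq 0$, the zero case being trivial), and symmetrically $\pi_2=\nu$.

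Finally, from $\int\C(\Gamma)\d\pi\d\pi=0$ together with $\C^{-1}(\{0\})=\{0\}$ and the fact that $\Delta$ is a distance on $\RR_+$, I would deduce $d_X(x,x')=d_Y(y,y')$ for $\pi\otimes\pi$-almost every $((x,y),(x',y'))$. At this point I would invoke the standard M\'emoli-style argument: if $(x,y),(x,y')\in spt(\pi)$ then $d_Y(y,y')=d_X(x,x)=0$, so $\pi$ concentrates on the graph of a measurable map $\psi:spt(\mu)\to spt(\nu)$, and the already-established identities $\psi_\sharp\mu=\pi_2=\nu$ combined with the distance-preserving property show that $\psi$ is the desired isometry. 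I expect this last graph-extraction step to be the main technical obstacle, since adapting the probability-measure argument of~\cite{memoli2011gromov} to arbitrary positive measures requires care with null sets and measurable selections, whereas the rest of the proof is essentially bookkeeping.
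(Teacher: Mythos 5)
Your proof is correct and follows essentially the same route as the paper (see Proposition~\ref{thm-ugw-definite} in Appendix~\ref{appendix-distance-ugw}): take an optimal plan, force each term of $\Ll$ to vanish, deduce matching marginals and zero distortion, then extract an isometry. The one step you flag as delicate --- extracting a measurable isometric bijection between supports from the zero-distortion coupling --- is exactly what the paper handles by invoking~\cite[Lemma~1.10]{sturm2012space}, which already packages the equivalence between the existence of a coupling with $d_X(x,x')=d_Y(y,y')$ a.e.\ and the existence of a Borel isometric bijection with Borel inverse pushing $\mu$ to $\nu$; that citation takes care of the null-set and measurable-selection bookkeeping you anticipate in your M\'emoli-style sketch. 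Your explicit derivation that $\phi^\prime_\infty>0$ follows from convexity, $\phi(1)=0$ and $\phi^{-1}(\{0\})=\{1\}$, and the tensorization argument passing from $\pi_1\otimes\pi_1=\mu\otimes\mu$ to $\pi_1=\mu$ by comparing total masses (with the $\mu=0$ degeneracy noted), are both correct and make explicit two points the paper leaves implicit.
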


We end this section with a reformulation of $\UGW$ which is important to make the connection with the second formulation $\CGW$ of the following section.
%
It splits $\UGW$ into two parts: the term $\phi(0)(|(\mu\otimes\mu)^\bot| + |(\nu\otimes\nu)^\bot|)$  accounts for the pure creation/destruction of mass and a new transport cost $L_c$ accounts for the remaining part (partial/pure transport and partial creation/destruction of mass).

\begin{lemma}\label{lem-rewrite-ugw}
	Defining $L_{c}(a,b) \eqdef c + a\phi(1/a) + b\phi(1/b)$, and writing $(\f \eqdef \frac{\d\mu}{\d\pi_1}, \g \eqdef \frac{\d\nu}{\d\pi_2})$ the Lebesgue densities of $(\mu,\nu)$ w.r.t. $(\pi_1,\pi_2)$ such that
	$\mu = \f \pi_1 + \mu^\bot$ and $\nu = \g \pi_2 + \nu^\bot$, one has
	\begin{equation}
		\begin{aligned}\label{eq-rewrite-ugw}
		\Ll(\pi)= &\int_{X^2 \times Y^2} L_{\C(|d_X - d_Y|)}(\f\otimes\f,\g\otimes\g)\d\pi\d\pi +\phi(0)(|(\mu\otimes\mu)^\bot| + |(\nu\otimes\nu)^\bot|).
		\end{aligned}
	\end{equation}
\end{lemma}
\begin{proof}
	Write $\f = \tfrac{\d\mu}{\d\pi_1}$ and $\g = \tfrac{\d\nu}{\d\pi_2}$. 
	The Lebesgue decompositions read
	$\mu\otimes\mu = (\f\otimes\f) \pi_1\otimes\pi_1 + (\mu\otimes\mu)^\bot$ and $\nu\otimes\nu = (\g\otimes\g) \pi_2\otimes\pi_2 + (\nu\otimes\nu)^\bot$, thanks to the tensorized structure of the decomposed plans.
	To prove Equation~\eqref{eq-rewrite-ugw}, we need to define the reverse entropy~\cite{liero2015optimal} such that $\D_\phi(\al|\mu) = \D_\psi(\mu|\al)$, where $\psi(x)\triangleq x\phi(\tfrac{1}{x})$ is also an entropy function satisfying $\psi^\prime_\infty=\phi(0)$.
	One then has
	\begin{align*}
	\Ll(\pi) &= \int_{X^2 \times Y^2} \C(\Gamma)\d\pi\d\pi
	+ \D_\phi^\otimes(\pi_1|\mu) + \D_\phi^\otimes(\pi_2|\nu)\\
	&= \int_{X^2 \times Y^2} \C(\Gamma)\d\pi\d\pi
	+ \D_\psi^\otimes(\mu|\pi_1) + \D_\psi^\otimes(\nu|\pi_2)
	\end{align*}
	\begin{align*}
	\Ll(\pi) &= \int_{X^2 \times Y^2} \C(\Gamma)\d\pi\d\pi + \int_{X^2}\psi(\f\otimes\f)\d\pi_1\d\pi_1 + \int_{Y^2}\psi(\g\otimes\g)\d\pi_2\d\pi_2\nonumber\\
	&\qquad+\phi(0)(|(\mu\otimes\mu)^\bot| + |(\nu\otimes\nu)^\bot|)\\
	&= \int_{X^2 \times Y^2} L_{\C(\Gamma)}(\f\otimes\f,\g\otimes\g)\d\pi\d\pi+\phi(0)(|(\mu\otimes\mu)^\bot| + |(\nu\otimes\nu)^\bot|).\nonumber
	\end{align*}
	Using the definition of $\psi$ in $L_c$ ends the proof.
\end{proof}

\subsection{The conic Gromov-Wasserstein distance}

We introduce a second ``conic'' formulation of unbalanced $\GW$, which is connected to $\UGW$, and whose construction is inspired by the conic formulation of UOT (see Appendix~\ref{sec-app-background} for an overview). 

\subsubsection{Background on cone sets and distances}
\label{sec-setup-dist}

The conic formulation lifts a point $x \in X$ to a couple $(x,r) \in X \times \RR^+$ where $r$ encodes some (power of a) mass.
Then we seek optimal transport plans defined over $\Co[X] \eqdef X\times\RR_+ / (X\times\{0\})$, where coordinates $(x,r=0)$ with no mass are merged into a single point $\zc_{X}$ called the apex of the cone. In the sequel, points of $X\times\RR_+$ are noted $(x,r)$, while $[x,r]$ are quotiented points of $\Co[X]$. 

While transport plans depend on variables  $([x,r],[y,s])$ and $([x',r'],[y',s'])$ in $\Co[X]\times\Co[Y]$, the transportation cost involved in our conic formulation only makes use of the 2-D cone $\Co[\RR_+]$ over $\RR_+$ endowed with the distance $|u-v|$ (note that any other distance on $\RR$ could be used as well).
More specifically, we consider coordinates of the form $([u,a],[v,b]) = ([d_X(x,x'),rr'], [d_Y(y,y'),ss']) \in \Co[\RR_+] \times \Co[\RR_+]$.
Thus we now describe conic discrepancies $\Dd$ on $\Co[\RR_+]$, which are defined for $(p,q) \geq 1$ as $\Dd([u,a], [v,b])^q \eqdef H_{\C(|u-v|)}(a^p, b^p)$, 
where $H_c(a^p, b^p) \eqdef \inf_{\theta\geq 0} \theta L_c(\tfrac{a^p}{\theta}, \tfrac{b^p}{\theta})$
%
is the perspective transform of $L_c$ introduced in Lemma~\ref{lem-rewrite-ugw}.
The intuition underpinning the definition of this cost is that the perspective transform accounts for the possibility to rescale a transport plan $\pi$ by a scalar $\th$ 
but the scaling is performed pointwise instead of globally.
In general $\Dd$ is not a distance, but it is always definite as stated by this result proved in Appendix~\ref{sec-app-background}.

\begin{proposition}\label{prop-cone-dist-definite}
	Assume $\C^{-1}(\{0\})=\{0\}$, $\phi^{-1}(\{0\})=\{1\}$ and $\phi$ is coercive. 
	Then $\Dd$ is definite on $\Co[\RR^+]$, i.e. $\Dd([u,a], [v,b]) =0$ if and only if $(a=b=0) \;\textrm{or}\; (a=b \;\textrm{and}\; u=v)$.
\end{proposition}
%
Of particular interest are those $\phi$ where $\Dd$ is a distance, which necessitates a careful choice of $\C,p$ and $q$.
We now detail three examples where this is the case.
\vspace*{-0.35cm}
\paragraph{Gaussian Hellinger distance (GH).}
When $\D_\phi = \KL$, $\C(t) = t^2$ and $q=p=2$, then one has
	$\Dd([u,a], [v,b])^2 = a^2 + b^2 - 2abe^{-|u-v| / 2}$. This cone distance~\citep{burago2001course} is further generalized by~\cite{de2019metric} who shows that $\Dd$ is a distance for power entropies $\phi(s)=\frac{s^p - p(s-1)-1}{p(p-1)}$ if $p \geq 1$ (the case $p=1$ corresponding to $\D_\phi=\KL$).
\vspace*{-0.3cm}
\paragraph{Hellinger-Kantorovich (HK) / Wasserstein-Fisher-Rao distance (WFR).} When $\D_\phi =\KL$, $\C(t) = -\log\cos^2(t\wedge\tfrac{\pi}{2})$ and $q=p=2$, then one has
	$\Dd([u,a], [v,b])^2 = a^2 + b^2 - 2ab\cos(\tfrac{\pi}{2}\wedge |u-v|)$.
This construction, which might seem peculiar, corresponds to the one used to make unbalanced OT a geodesic distance, as detailed in~\citep{liero2015optimal,chizat2018unbalanced}. 
\vspace*{-0.3cm}
\paragraph{Partial optimal transport distance (PT).} When $\D_\phi = \TV$, $\C(t)=t^q$, $q\geq 1$ and $p=1$, then
	$\Dd([u,a], [v,b])^q = a + b - (a\wedge b)(2-|u-v|^q)_+$ 
defines a cone distance~\citep{chizat2018unbalanced}.

\subsubsection{Definitions and properties}

The conic formulation consists in solving a $\GW$ problem on the cone, with the addition of two linear constraints. Informally speaking, $L_c$ from Lemma~\ref{lem-rewrite-ugw} becomes $\Dd$, the term $(|(\mu\otimes\mu)^\bot| + |(\nu\otimes\nu)^\bot|)$ is taken into account by the constraints~\eqref{eq-const-conic} below, and the variables $(\f,\g)$ are replaced by $(r^p,s^p)$.
It reads  $\CGW(\Xx,\Yy) \eqdef \inf_{\al\in\Uu_p(\mu,\nu)}\Hh(\al)$ where
\begin{equation}\label{eq-ugw-conic}
\begin{aligned}
	 \Hh(\al)\eqdef\int &\Dd([d_X(x,x'), r r'], [d_Y(y,y'), s s'])^q \,\d\al([x,r], [y,s])\d\al([x',r'], [y',s']),
\end{aligned}
\end{equation}
and $\Uu_{p}(\mu,\nu)$ is defined as the set
\begin{equation}\label{eq-const-conic}
\Uu_{p}(\mu,\nu) \eqdef \left\{
\begin{aligned}
	 \al\in\Mm_+(\Co[X]\times\Co[Y]),\;
	 \int_{\RR_+} r^p \d\al_1(\cdot,r)=\mu,\;
	 \int_{\RR_+} s^p \d\al_2(\cdot,s)=\nu
\end{aligned}
\right\}.
\end{equation}
It is similar to the conic formulation of UW, see Appendix~\ref{sec-app-background}.
Note that similarly to the $\GW$ formulation~\eqref{eq-defn-gw} -- and in sharp contrast with the conic formulation of UW -- here the transport plans are defined on the cone $\Co[X] \times \Co[Y]$ but the cost $\Dd$ is a distance on $\Co[\RR_+]$.

We present now the main contributions of this paper, proved in Appendix~\ref{appendix-distance-cgw}. 
We state that $\CGW$ defines a distance under conditions that hold for the settings of Section~\ref{sec-setup-dist}, and that it is upper-bounded by $\UGW$. 
%
%
The divergence $\UGW$ can be approximated with efficient numerical schemes as detailed in Section~\ref{sec-algo}.

\begin{theorem}\label{thm-ugw-dist}
		(i) The divergence $\CGW$ is symmetric, positive and definite up to isometries.
		(ii) If $\Dd$ is a distance on $\Co[\RR_+]$, then $\CGW^{1/q}$ is a distance on the set of mm-spaces up to isometries.
		(iii) For any $(\D_\phi, \C, p, q)$ with associated cost $\Dd$ on the cone, one has $\UGW\geq\CGW$.
\end{theorem}

\section{Algorithms}
\label{sec-algo}

We focus in this section on the numerical computation of the upper bound $\UGW$ using a bi-convex relaxation and derive an alternate minimization scheme coupled with entropic regularization.
We also propose to approximate $\CGW$ by doing a similar alternate minimization, as detailed in Appendix~\ref{sec-app-xp}.
We provide guarantees of tightness on the bi-convex relaxation for $\CGW$ (see Theorem~\ref{ThTightLowerBound}).
The computation of the distance $\CGW$ is heavy in practice because it requires an optimization over a lifted conic space, which needs to be discretized. 
Thus it does not scale to large problem for $\CGW$, but allows to explore numerically how tight is the upper bound $\UGW\geq\CGW$, see Section~\ref{sec-xp}.
The algorithm for $\UGW$ is presented on arbitrary measures, the special case of discrete measures being a particular case. 
The discretized formulas and algorithms are detailed in Appendix~\ref{appendix-algo}, see also~\citet{chizat2016scaling,peyre2016gromov}.
All implementations are available at \url{https://github.com/thibsej/unbalanced_gromov_wasserstein}, and installable in Python with the command \textit{pip install unbalancedgw}.

\subsection{Bi-convex relaxation and tightness}

In order to derive a simple numerical approximation scheme, following~\citet{memoli2011gromov}, we introduce a lower bound obtained by introducing two transportation plans. To further accelerate the method and enable GPU-friendly iterations, similarly to~\citet{gold1996softmax,solomon2016entropic}, we consider an entropic regularization. It reads, for any $\epsilon \geq 0$,
\begin{align}\label{eq-lower-bound}
	&\UGW_\epsilon(\Xx, \Yy) \eqdef \inf_\pi\Ll(\pi) +\epsilon\KL^\otimes(\pi|\mu\otimes\nu)
	  \geq\uinf{\pi,\ga} \Ff(\pi,\ga) +\epsilon\KL(\pi\otimes\gamma|(\mu\otimes\nu)^{\otimes 2}),\\
		&\qandq \Ff(\pi,\ga) \eqdef \int_{X^2 \times Y^2} \C(|d_X - d_Y|)\d\pi \otimes \ga
	+ \D_\phi(\pi_1\otimes\ga_1|\mu\otimes\mu) + \D_\phi(\pi_2 \otimes \ga_2|\nu \otimes \nu)\,,\nonumber
\end{align}
where $(\ga_1,\ga_2)$ denote the marginals of the plan $\ga$. In the sequel we write $\Ff_\epsilon = \Ff + \epsilon\KL^\otimes$.
Note that in contrast to the entropic regularization of $\GW$~\cite{peyre2016gromov}, here we use a tensorized entropy to maintain the overall homogeneity of the energy.
A simple method to approximate this lower bound is to perform an alternate minimization on $\pi$ and $\ga$, which is known to converge for smooth $\phi$ to a stationary point since the coupling term in the functional is smooth~\citep{tseng2001convergence}.
%
%
%
%
Note that if $\pi\otimes\gamma$ is optimal then so is $(s\pi)\otimes(\frac{1}{s}\gamma)$ with $s\geq 0$. Thus without loss of generality we can optimize under the constraint $m(\pi)=m(\gamma)$ by setting $s=\sqrt{m(\ga) / m(\pi)}$. 

%
We now discuss the tightness of the bi-convex relaxation by generalizing a result of Konno. We first present a result which applies to general quadratic assignment problems, then state its application to our setting.
\begin{theorem}[Tight relaxation]
	\label{ThKonno'sGeneralization}
	Let $B$ a Banach space, let $f:B \mapsto \RR \cup \{ +\infty\}$ be a function and
	let $\Ll: C \subset B \mapsto \RR$ the function defined on the convex set $C \subset B$ by $\Ll(\pi) = \frac 12  \langle \pi,k(\pi)\rangle + 2f(\pi)$
	where $k$ is a symmetric bilinear map which is negative (not necessarily definite) on $\Delta C \eqdef \operatorname{Span}(\{ \pi- \ga \,;\, (\pi,\ga) \in C\})$, that is, for any $z \in \Delta C$,
	$\langle z , kz \rangle \leq 0$.
	Assume that there exists $\pi_0 \in C$ such that $\Ll(\pi_0) < +\infty$, and define $\Ff(\pi,\ga) \eqdef \frac 12 \langle \pi,k(\ga)\rangle + f(\pi) + f(\ga)$.
	Then, for any $(\pi_*,\ga_*) \in \arg \min \Ff(\pi,\ga)$, we have $\Ff(\pi_*,\pi_*) = \Ff(\ga_*,\ga_*) = \Ff(\pi_*,\ga_*)$.
	Moreover, if one assumes either that $k$ is a definite kernel or $f$ is strictly convex, one gets $\pi_* = \ga_*$.
\end{theorem}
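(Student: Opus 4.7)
The plan is to exploit the optimality of $(\pi_*,\gamma_*)$ by comparing it against the two ``diagonal'' feasible points $(\pi_*,\pi_*)$ and $(\gamma_*,\gamma_*)$, both of which are admissible in $C \times C$ since $\pi_*,\gamma_* \in C$. First I would introduce the two slacks
\begin{equation*}
    a_1 := \Ff(\pi_*,\pi_*) - \Ff(\pi_*,\gamma_*), \qquad a_2 := \Ff(\gamma_*,\gamma_*) - \Ff(\pi_*,\gamma_*),
\end{equation*}
both nonnegative by optimality. The key observation is that upon summing, the $f$-contributions cancel (since $a_1$ carries $f(\pi_*) - f(\gamma_*)$ while $a_2$ carries $f(\gamma_*) - f(\pi_*)$), leaving only a purely quadratic expression in $k$.

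Next, using the symmetry of $k$ to rewrite $\langle \pi_*, k(\gamma_*) \rangle = \langle \gamma_*, k(\pi_*) \rangle$, the sum collapses to the polar identity
\begin{equation*}
    a_1 + a_2 \;=\; \tfrac{1}{2}\,\langle \pi_* - \gamma_*,\, k(\pi_* - \gamma_*) \rangle.
\end{equation*}
Since $\pi_* - \gamma_* \in \Delta C$ and $k$ is negative on $\Delta C$, the right-hand side is $\leq 0$, while each $a_i$ is $\geq 0$. These two constraints are only jointly compatible with $a_1 = a_2 = 0$, and simultaneously with $\langle \pi_* - \gamma_*, k(\pi_* - \gamma_*) \rangle = 0$. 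The vanishing of both slacks is precisely the chain of equalities $\Ff(\pi_*,\pi_*) = \Ff(\gamma_*,\gamma_*) = \Ff(\pi_*,\gamma_*)$.

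For the ``moreover'' clause I would split into the two stated cases. If $k$ is definite on $\Delta C$, then $\langle \pi_* - \gamma_*, k(\pi_* - \gamma_*) \rangle = 0$ immediately yields $\pi_* = \gamma_*$. If instead $f$ is strictly convex, the equality $a_1 = 0$ says that $\pi_*$ is a second minimizer of the partial map $\gamma \mapsto \Ff(\pi_*,\gamma) = \tfrac{1}{2}\langle \pi_*, k(\gamma) \rangle + f(\pi_*) + f(\gamma)$ on $C$. This map is strictly convex in $\gamma$ (its $k$-part is linear in $\gamma$ once $\pi_*$ is fixed, and $f$ is strictly convex), so its minimizer on the convex set $C$ is unique, giving $\pi_* = \gamma_*$.

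I do not anticipate any real obstacle: the argument is essentially polar-identity bookkeeping once the two comparison inequalities are written down. The one point that needs care is the cancellation of the $f$-terms in $a_1 + a_2$, which works precisely because the coefficient $\tfrac{1}{2}$ in front of the cross bilinear term in $\Ff$ is matched by the coefficient $2$ in front of $f$ in $\Ll$, ensuring $\Ff(\pi,\pi) = \Ll(\pi)$ on the diagonal. The hypothesis $\Ll(\pi_0) < +\infty$ plays no direct role beyond guaranteeing that $\arg\min \Ff$ is a meaningful object to reason about.
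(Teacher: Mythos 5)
Your proof is correct, and the main argument (establishing the chain of equalities) follows the same route as the paper: you compare $\Ff(\pi_*,\ga_*)$ against the two diagonal values, sum the resulting inequalities, and observe that the $f$-terms cancel so that only the polar form $\tfrac12\langle \pi_*-\ga_*, k(\pi_*-\ga_*)\rangle$ survives, which is simultaneously $\geq 0$ (from optimality) and $\leq 0$ (from negativity of $k$), forcing both slacks and the quadratic form to vanish.

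Where you genuinely diverge is in the strictly convex case. The paper shows that $\pi_* - \ga_* \in \operatorname{Ker}(k)$, deduces that the quadratic part of $\Ll$ is constant along the segment $[\pi_*,\ga_*]$, and then applies strict convexity of $f$ restricted to that segment. You instead note that, $\pi_*$ held fixed, the partial map $\ga \mapsto \Ff(\pi_*,\ga)$ is affine-plus-strictly-convex, hence strictly convex, and that $a_1 = 0$ exhibits $\pi_*$ as a second minimizer of this partial map alongside $\ga_*$ (the latter being a minimizer by joint optimality), so uniqueness forces $\pi_* = \ga_*$. Your version is shorter and avoids re-deriving the kernel constancy along the segment; it uses only the linearity of $k(\cdot)$ in its argument, not the fact that $\pi_*-\ga_* \in \operatorname{Ker}(k)$, so it is slightly more self-contained. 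The paper's segment argument, on the other hand, is framed around $\Ll$ itself and makes it transparent why the diagonal minimizer is unique. Both are valid; yours is the lighter touch.

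One minor nit: you should state explicitly that $\ga_*$ is a minimizer of the partial map $\ga \mapsto \Ff(\pi_*,\ga)$ (which follows immediately from joint optimality over $C \times C$) before calling $\pi_*$ a ``second'' minimizer of that map; as written the reader has to supply that link.
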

The above Theorem~\ref{ThKonno'sGeneralization} is proved in Appendix~\ref{appendix-algo}.
As an application, we now state our tightness result for $\GW_\epsilon$ and CGW.
In those settings the optimizers of the bi-convex relaxation are also optimal for the original problem.
\begin{theorem}
	\label{ThTightLowerBound}
For $GW_\epsilon$ with $\epsilon\geq 0$ or for $\CGW$, assume that $\C(t)=t^2$ and that $(d_X,d_Y)$ are both conditionnally negative (or conditionally positive) kernels. Then the bi-convex relaxation of both problems is tight.
\end{theorem}
\begin{proof}
	The proof for $\CGW$ is detailed in Appendix~\ref{sec-app-xp}, we prove the tightness for $\GW_\epsilon$.
	When $\C(t)=t^2$ the kernel $k=\C(|d_X - d_Y|)$ is conditionally negative on the set $\{(\pi,\ga),\,\, \pi_1=\ga_1\;\textrm{and}\;\pi_2=\ga_2\}$, i.e. we have $\dotp{(\pi-\ga)}{k(\pi-\ga)}\leq 0$ (see~\citet{maron2018probably}).
	For $\GW_\epsilon$ one has $\pi_1=\ga_1=\mu$ and $\pi_2=\ga_2=\nu$ thanks to the constraints on marginals. Thus the kernel is negative semi-definite and the proof of Theorem~\ref{ThKonno'sGeneralization} applies, hence the tightness of the relaxation.
\end{proof}
%
%
Konno's result~\citet{konno1976maximization} applies for unregularized ($\epsilon=0$), Balanced-$\GW$. The novelty of Theorem~\ref{ThTightLowerBound} is its extension to both $\GW_\epsilon$ and $\CGW$.
So far it is an open question whether the relaxation is tight or not for $\UGW_\epsilon$, because the above proof no longer holds.
Note that in all our numerical simulations, our solvers always found solutions of $\UGW_\epsilon$ such that $\pi=\ga$ when $|d_X - d_Y|^2$ is conditonally negative.
The property that the kernel $|d_X- d_Y|^2$ is negative does not hold in general (e.g. for graph geodesic distances) and the tightness of the relaxation remains open in this setting.
We know from~\cite[Theorem 1]{maron2018probably} that it is conditionally negative semi-definite when both $(d_X,d_Y)$ are conditionally negative kernels.
%
%
Examples of distances which are negative kernels are tree metrics in the case of graphs, as well as Euclidean, spherical and hyperbolic distances over their respective manifolds~\cite{feragen2015geodesic}.
In practice, when $\epsilon$ is small, we observed in the indefinite setting that the relaxation outputs more frequently spurious minima than in the negative semi-definite setting.

%

\subsection{Alternate Sinkhorn minimization}

Minimizing the lower bound~\eqref{eq-lower-bound} with respect to either $\pi$ or $\ga$ is non-trivial for an arbitrary $\phi$. We restrict our attention to the Kullback-Leibler case $\D_\phi=\rho\KL$ with $\rho >0$, which can be addressed by solving a regularized and convex unbalanced problem as studied in~\citet{chizat2016scaling, sejourne2019sinkhorn}. It is explained in the following proposition.

\begin{proposition}\label{prop-alternate-simple}
	For a fixed $\ga$, the optimal
		$\pi\in\arg\umin{\pi} \Ff(\pi,\ga) +\epsilon\KL(\pi\otimes\gamma|(\mu\otimes\nu)^{\otimes 2})$
	solves
	\begin{align*}
		\umin{\pi} &\int c^\epsilon_\ga(x,y) \d\pi(x,y) + \rho m(\ga) \KL(\pi_1|\mu) + \rho m(\ga) \KL(\pi_2|\nu) + \epsilon m(\ga) \KL(\pi|\mu\otimes\nu),
	\end{align*}
	where $m(\ga) \eqdef \ga(X \times Y)$ is the mass of $\ga$, and
	where we define the cost associated to $\ga$ as
	\begin{align*}
		c^\epsilon_\ga(x,y) \!  \eqdef \!  \int \! \C(|d_X(x,\cdot) - d_Y(y,\cdot)|)\d\ga \! + \!  \rho\!\!  \int \! \log(\frac{\d\ga_1}{\d\mu})\d\ga_1 \! 
		+ \! \rho \! \!  \int \! \log(\frac{\d\ga_2}{\d\nu})\d\ga_2 \! 
		 + \!  \epsilon \! \! \int \!   \log(\frac{\d\ga}{\d\mu\d\nu})\d\ga.
	\end{align*}
\end{proposition}

\begin{wrapfigure}{r}{0.44\textwidth}
	\vskip-2.0em
	\begin{minipage}{0.44\textwidth}
		\begin{algorithm}[H]
			\caption{-- \textbf{UGW($\Xx$, $\Yy$, $\rho$, $\epsilon$)} \label{algo-flb-sinkhorn}}
			\small{
				\textbf{Input:} mm-spaces $(\Xx,\Yy)$,  relax. $\rho$, regul. $\epsilon$\\
				\textbf{Output:}~$\pi,\ga$~solving~\eqref{eq-lower-bound}\\
				\vspace*{-1.0em}
				\begin{algorithmic}
					\STATE Init. $\pi=\ga=\mu\otimes\nu / \sqrt{m(\mu) m(\nu)}$, $\g=0$.
					\WHILE{$(\pi,\ga)$ has not converged}
					\STATE Update $\pi\leftarrow\ga$,
					\STATE then  $c \leftarrow c^\epsilon_{\pi}$,   $\;\tilde{\rho} \leftarrow m(\pi)\rho$,   $\;\tilde{\epsilon} \leftarrow m(\pi)\epsilon$
					\vspace*{0.05cm}
					\WHILE{$(\f,\g)$ has not converged}
					\STATE  $ \f \leftarrow -\frac{\tilde{\epsilon}\tilde{\rho}}{\tilde{\epsilon} + \tilde{\rho}}
					\log \int e^{(\g(y) - c(\cdot,y)) / \tilde{\epsilon}}\d\nu(y)$
					\STATE  $\g \leftarrow -\frac{\tilde{\epsilon}\tilde{\rho}}{\tilde{\epsilon} + \tilde{\rho}}
					\log \int e^{(\f(x) - c(x,\cdot)) / \tilde{\epsilon}}\d\mu(x)$
					\ENDWHILE
					\STATE Upd. $\ga(x,y)\!\!\leftarrow\!\! e^{\frac{\f(x)+\g(y)-c(x,y)}{\tilde{\epsilon}}}\mu(x)\nu(y)$
					\STATE Rescale $\ga\leftarrow \sqrt{m(\pi) / m(\ga)} \ga$
					\ENDWHILE
					\STATE Return $(\pi,\ga)$.
				\end{algorithmic}
			}
		\end{algorithm}
	\end{minipage}\vskip-1.5em
\end{wrapfigure}
Computing the cost $c^\epsilon_\ga$ for spaces $X$ and $Y$ of $n$ points has in general a cost $O(n^4)$ in time and memory. However, as explained for instance in~\citet{peyre2016gromov}, for the special case $\C(t)=t^2$, this cost is reduced to $O(n^3)$ in time and $O(n^2)$ in memory. 
%
%
This is the setting we consider in the numerical simulations.
This makes the method applicable for scales of the order of $10^4$ points. For larger datasets one should use approximation schemes such as hierarchical approaches~\citep{xu2019scalable} or Nystr\"om compression of the kernel~\citep{altschuler2018massively}.

The resulting alternate minimization method is detailed in Algorithm~\ref{algo-flb-sinkhorn}, see Appendix~\ref{appendix-algo} for a discretized version.
It uses the unbalanced Sinkhorn algorithm of~\citet{chizat2016scaling, sejourne2019sinkhorn} as sub-iterations and takes $\pi = \mu\otimes\nu / \sqrt{m(\mu) m(\nu)}$ to initialize the updates.
This Sinkhorn algorithm operates over a pair of continuous functions (so-called Kantorovitch potentials) $f(x)$ and $g(y)$.
For discrete spaces $X$ and $Y$ of size $n$, these functions are stored in vectors of size $n$, and that integral involved in the updates becomes a sum. Each iteration of Sinkhorn thus has a cost $n^2$, and all the involved operation can be efficiently mapped to parallelizable GPU routines as detailed in~\citet{chizat2016scaling, sejourne2019sinkhorn}.
Another advantage of using an unbalanced Sinkhorn algorithm is its complexity $O(n^2 / \epsilon)$ to compute an $\epsilon$-approximation, as stated in~\citet{pham2020unbalanced}, which should be compared to $O(n^2 / \epsilon^2)$ operations for balanced Sinkhorn.

\if 0
\begin{algorithm}[!h]
	\caption{-- \textbf{UGW($\Xx$, $\Yy$, $\rho$, $\epsilon$)} \label{algo-flb-sinkhorn}}
	\textbf{Input:} mm-spaces $(\Xx,\Yy)$,  relaxation $\rho$, regularization $\epsilon$\\
	\textbf{Output:} plans $(\pi,\ga)$ minimizing~\ref{eq-lower-bound}
	\begin{algorithmic}
		\STATE Initialize $\pi=\ga=\mu\otimes\nu / \sqrt{m(\mu) m(\nu)}$, $\g=0$.
		\WHILE{$(\pi,\ga)$ has not converged}
		\STATE Update $\pi\leftarrow\ga$,
		\STATE then  $c \leftarrow c^\epsilon_{\pi}$,   $\;\tilde{\rho} \leftarrow m(\pi)\rho$,   $\;\tilde{\epsilon} \leftarrow m(\pi)\epsilon$
		\vspace*{0.05cm}
		\WHILE{$(\f,\g)$ has not converged}
		\STATE  $\forall x, \; \f(x)\leftarrow -\frac{\tilde{\epsilon}\tilde{\rho}}{\tilde{\epsilon} + \tilde{\rho}}
		\log\Big(\int e^{(\g(y) - c(x,y)) / \tilde{\epsilon}}\d\nu(y) \Big)$
		\STATE  $\forall y, \; \g(y)\leftarrow -\frac{\tilde{\epsilon}\tilde{\rho}}{\tilde{\epsilon} + \tilde{\rho}}
		\log\Big(\int e^{(\f(x) - c(x,y)) / \tilde{\epsilon}}\d\mu(x) \Big)$
		\ENDWHILE
		\STATE Update $\ga(x,y)\leftarrow e^{(\f(x)+\g(y)-c(x,y)) / \tilde{\epsilon}}\mu(x)\nu(y)$
		\STATE Rescale $\ga\leftarrow \sqrt{m(\pi) / m(\ga)} \ga$
		\ENDWHILE
		\STATE Return $(\pi,\ga)$.
	\end{algorithmic}
\end{algorithm}
\fi

Note also that balanced $\GW$ is recovered as a special case when setting $\rho\rightarrow+\infty$, so that $\tilde{\rho} / (\tilde{\epsilon} + \tilde{\rho})\rightarrow 1$ should be used in the iterations.
In order to speed up Sinkhorn inner-loops, especially for small values of $\epsilon$, one can use linear extrapolation~\citep{thibault2017overrelaxed} or non-linear Anderson acceleration~\citep{anderson1965iterative, scieur2016regularized}.

There is an extra scaling step after computing $\ga$ involving the mass $m(\pi)$. It corresponds to the scaling $s$ of $\pi\otimes\gamma$ such that $m(\pi)=m(\gamma)$, and we observe that this scaling is key not only to impose this mass equality but also to stabilize the algorithm. Otherwise we observed that $m(\ga)<1<m(\pi)$ and underflows whenever $m(\ga)\rightarrow 0$ and $m(\pi)\rightarrow\infty$.


\section{Numerical experiments}
\label{sec-xp}

This section presents simulations on synthetic examples to highlight the qualitative behavior of $\UGW$ and the tightness of the bound $\UGW\geq\CGW$. 
Other illustrations on $\UGW$ are available in Appendix~\ref{sec-app-xp}.
We end the section with a learning application of $\UGW$ in a positive-unlabeled setting, using domain adaptation data so as to compare with PGW~\citet{chapel2020partial}.
In the synthetic experiments, $\mu$ and $\nu$ are probability distributions, which allows us to compare $\GW$ with $\UGW$.

\paragraph{Robustness to imbalanced classes.}

In this first example, we take $X=\RR^3$, $Y=\RR^2$ and consider $\Ee_2$, $\Ee_3$, $\Cc$ and $\Ss$ to be uniform distributions on a 2D and 3D ellipse, a square and a sphere. 
We consider mm-spaces of different dimensions to emphasize the ability of (U)GW to compare different spaces.
Figure~\ref{fig-weight} contrasts the transportation plan obtained by $\GW$ and $\UGW$ for a fixed $\mu=0.5 \Ee_3 + 0.5 \Ss$ and $\nu$ obtained using two different mixtures of $\Ee_2$ and $\Cc$. 
The black segments show the largest entries of the transportation matrix $\pi$, for a sub-sampled set of points (to ease visibility), thus effectively displaying the matching induced by the plan.
Furthermore, the width of the dots are scaled according to the mass of the marginals $\pi_1 \approx \mu$ and $\pi_2 \approx \nu$, i.e. the smaller the point, the smaller is the amount of transported mass.
This figure shows that the exact conservation of mass imposed by $\GW$ leads to a poor geometrical matching of the shapes which have different global mass.
As this should be expected, $\UGW$ recovers coherent matchings. We suspect the alternate minimization algorithm is able to find the global minimum in these cases. 

\begin{figure}[h]
	\centering
	\begin{tabular}{c@{\hspace{1mm}}|@{\hspace{1mm}}c@{\hspace{8mm}}c@{\hspace{1mm}}|@{\hspace{1mm}}c}%
		{\includegraphics[height=.22\linewidth]{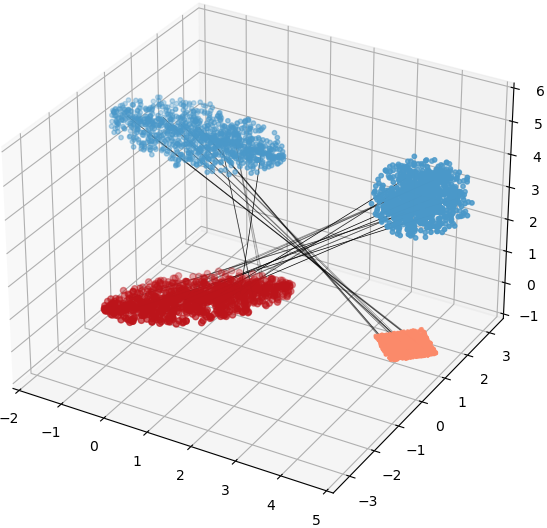}} & 
		{\includegraphics[height=.22\linewidth]{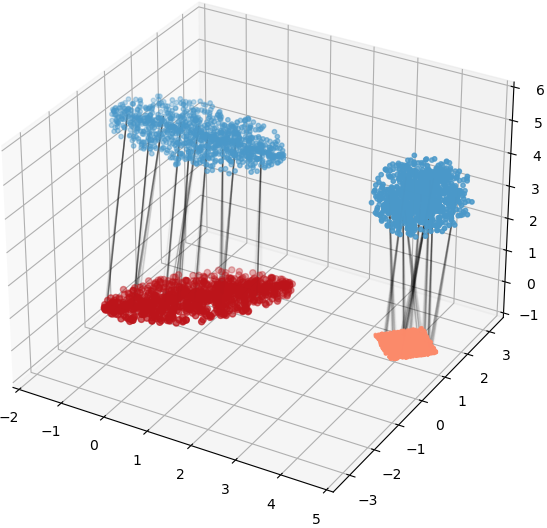}} &
		{\includegraphics[height=.22\linewidth]{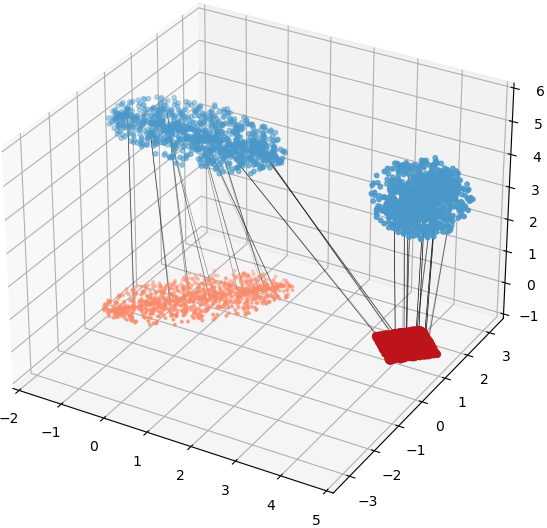}} & 
		{\includegraphics[height=.22\linewidth]{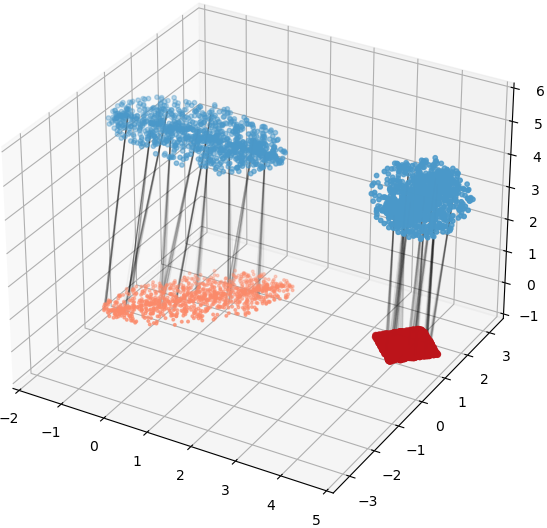}}\\[1mm]
		$\GW$ & $\UGW$ & $\GW$ & $\UGW$
	\end{tabular}
	\caption{$\GW$ vs. $\UGW$ transportation plan, using $\nu=0.3 \Ee_2 + 0.7 \Cc$ on the left, and
		$\nu=0.7 \Ee_2 + 0.3 \Cc$ on the right. The 2D mm-spaces is lifted into $\RR^3$ by padding the third coordinate to zero.}
	\label{fig-weight}
\end{figure}

\paragraph{Tightness of the bound CGW$\leq$UGW}

We propose to approximate $\CGW$ by doing a similar alternate minimization as for $\UGW$, as detailed in Appendix~\ref{sec-app-xp}. 
This numerical scheme does not scale to large problems, but allows us to explore numerically how tight is the upper bound $\UGW\geq\CGW$.
%
%
%
Figure~\ref{fig:cgw-ugw-local} highlights the fact that in Euclidean space $X=Y=\RR^d$, this bound seems to be tight when the two measures are sufficiently close. 
We consider discrete measures $\mu = \frac{1}{n}\sum_i \de_{x_i}$ in $X=Y=\RR^d$ and $\nu_t = \frac{1}{n}\sum_i \de_{y_i}$ where $y_i=x_i + t \De_i$ where $\De_i$ are random perturbations and denote $(\Xx,\Yy_t)$ the two mm-spaces associated to the Euclidean distance. As $t \rightarrow 0$, $\mu$ and $\nu_t$ get closer, we observe numerically that $\UGW\approx\CGW$. 
%
%
Figure~\ref{fig:cgw-ugw-hist} considers random points $(x_i)_i$ and $(y_i)_i$ and displays the histograms of the ratio $\CGW/\UGW$ for $n=3$. This shows that while the bound $\CGW\leq\UGW$ seems not tight, the ratio appears to be bounded even for points not being close. 
This numerical experiment suggests that $\UGW$ and $\CGW$ are locally equivalent and that $\UGW$ is in practice an acceptable proxy of the distance $\CGW$.
We leave for future works a tighter analysis of the gap between $\UGW$ and $\CGW$.


\begin{wrapfigure}{r}{0.35\textwidth}
	\vskip-.4cm
	\begin{minipage}{0.35\textwidth}
		\includegraphics[width=0.95\linewidth]{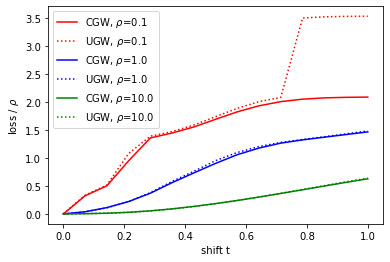}
		\caption{Comparison of $\UGW(\Xx,\Yy_t)$ and $\CGW(\Xx,\Yy_t)$ as the support gets shifted by a perturbation. }
		\label{fig:cgw-ugw-local}
	\end{minipage}\vskip-.2cm
\end{wrapfigure}

\paragraph{Positive unlabeled learning experiments}

Positive Unlabeled (PU) learning is a semi-supervised classification problem, where instead of learning from positive and negative samples $(x_i,\ell_i)_i$ with labels $\ell_i \in \{-1,1\}$ we only learn from one class labeled with positives, i.e. only those $X \eqdef \enscond{ x_i }{ \ell_i=1 }$.
The task is to leverage $X$ to predict the classes $\ell=\ell(y) \in \{-1,+1\}$ of unlabelled $y \in Y$ belong to a separate space. 
We consider here that $X,Y$ are embedded in Euclidean space, and denote $\Xx,\Yy$ the associated labelled and unlabelled mm-spaces, equipped with the uniform distribution.  
Our experiments are adapted from Partial-GW (PGW)~\citet{chapel2020partial}, which used partial GW to solve PU-learning. 
The rationale of using unbalanced OT methods for PU learning stems from the fact that positive samples should be matched with positive due to their similar features, while negative samples would be ignored due to dissimilar features that induce a laziness to transport mass and match them.

We consider PU learning over the Caltech office dataset used for domain adaptation tasks (with domains Caltech (C)~\citet{griffin2007caltech}, Amazon (A), Webcam (W) and DSLR (D)~\citet{saenko2010adapting}). The Caltech datasets are represented with two embeddings based on Surf and Decaf features~\citet{saenko2010adapting,donahue2014decaf}. 
On the latter datasets, we perform PU learning over similar features (e.g. surf-C $\rightarrow$ surf-* or decaf-C $\rightarrow$ decaf-*) and from one feature format to the other (e.g. surf-C $\rightarrow$ decaf-* or surf-C $\rightarrow$ decaf-*). 
Those features are projected via PCA to subspaces of dimension 10 for surf features and 40 for decaf features.
In the last task, one cannot use standard PU-method, and to the best of our knowledge, Unbalanced-GW methods are the only approaches for PU learning across different domains/features.

\begin{wrapfigure}{r}{0.38\textwidth}
	\vskip-0.4cm
	\begin{minipage}{0.38\textwidth}
		\includegraphics[width=0.95\linewidth]{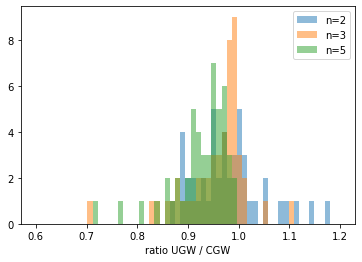}
		\caption{ Histograms of the ratio $\CGW / \UGW$ for random spaces with $n\in\{2,3,5\}$ samples. Ratios over 1 are due to local minima.}
		\label{fig:cgw-ugw-hist}
	\end{minipage}\vskip-.2cm
\end{wrapfigure}

The procedure is the following. 
%
We solve the PU learning problem by computing the optimal plan $\pi$ for $\UGW(\Xx, \Yy)$. 
We compute its first marginal $\pi_2$ on $\Yy$, and predict the labels of some $y \in Y$ as $\ell(y) \triangleq \sign( \pi_2(y)-q )$ where $q$ is the quantile of $\pi_2$ corresponding to the proportion $r$ of positives samples in $Y$. 
Following~\citet{chapel2020partial} which is adapted from~\citet{kato2018learning, hsieh2019classification}, this proportion $r$ is assumed to be known.
We report the accuracy of the prediction over the same 20 folds of the datasets, and use 20 other folds to validate the parameters of UGW. 
We consider 100 random samples for each fold of $(X,Y)$, a ratio of positive samples $r=0.1$ for domains (C,A,W,D), and a ratio $r=0.2$ for domains (C,A,W).

Since the GW objective is non-convex, the initialization of the minimization algorithms is key to obtain good performances. 
In~\citet{chapel2020partial} and our experiments, for tasks where $Y$ and $Y$ belong to the same Euclidean space, (e.g. surf-C $\rightarrow$ surf-*) we initialize $\pi$ with the Partial-Wasserstein (PW) solution with a squared Euclidean cost.
For cross-domain prediction (e.g. surf-C $\rightarrow$ decaf-*), following~\citet{chapel2020partial}, PGW is initialized with a list of plans built using a coarsened representation of the data with $k$-NN. 
While \citet{chapel2020partial} makes use in an oracle manner of the plan providing the best accuracy, we modified their protocol and keep the plan which has the lowest PGW cost, which seems fairer, hence the difference in performance with~\citet{chapel2020partial}. 
To initialize UGW when $X$ and $Y$ do not belong to the same Euclidean space, we use a UOT solution of a matching between distance histograms called FLB~\citet{memoli2011gromov}.
 We define $\FLB$ in our $\UGW$ setting as
\begin{equation}\label{eq-def-flb}
\FLB(\Xx,\Yy) \eqdef \min \int_{X\times Y} |\bar{\mu}\star d_X - \bar{\nu}\star d_Y|^2 \d\pi + \rho\KL(\pi_1|\mu) + \rho\KL(\pi_2|\nu) + \epsilon\KL(\pi|\mu\otimes\nu),
\end{equation}
where $\mu\star d_X(x) \eqdef \int d_X(x,x')\d\mu(x')$ is the eccentricity, i.e. a histogram of aggregated distances, and $\bar{\mu} = \mu / m(\mu)$. Contrary to GW~\citet{memoli2011gromov}, there is a priori no link between FLB and UGW.

In the experiments we slightly generalize UGW and use two different marginal penalties $\rho_1 \KL^\otimes(\pi_1|\mu) + \rho_2 \KL^\otimes(\pi_2|\nu)$  with two parameters $(\rho_1,\rho_2)$ to take into account shifts between domains/features.
Note that PGW has a single parameter (which plays a role similar to $(\rho_1,\rho_2)$) which controls the cost of mass creation/destruction. 
%
%
We set $\epsilon=2^{-9}$, which avoids introducing an extra parameter in the method. 
%
The value $(\rho_1,\rho_2)\in\{2^{-k},\; k\in\llbracket5,10\rrbracket\}^2$ are cross validated for each task on the validation folds, and we report the average accuracy on the testing folds.
We discuss in Appendix~\ref{sec-app-xp} the impact of reducing the number of parameters on the performance.
Comparison with other methods -- PU and PUSB~\citet{kato2018learning, du2014analysis} -- are provided in~\citet{chapel2020partial} and we focus here on the comparison with PGW only. 

The results are reported in Table~\ref{table:data-perf}.
We display the performance of PGW, UGW and the initialization used for UGW to guarantee that using UGW does improve the performance.
We observe that when the source and target dataset is the same (C$\rightarrow$C tasks), the PW initialization performs better and PGW/UGW degrade the performance, so that in this setting Optimal Transport should be preferred over GW, which is to be expected.
However when the domains are different, applying UGW improves the performance over the initialization (which is FLB) in almost all tasks. 
%
Note that in that case the methods PU, PUSB or PW cannot be used. 
%
%
Overall, this shows that GW methods are able to solve to some extent the PU learning problem across different spaces, and that using a ``softer'' KL penalties in UGW is at least at par with Partial GW, and performs better in some settings.

\begin{table}[]
	\begin{center}
		\resizebox{\textwidth}{!}{
			\begin{tabular}{|c|c|c|c|c||c|c|c|c|c|}
				\hline
				Dataset                      & prior & Init (PW) & PGW  & \textbf{UGW}    & Dataset                       & prior & Init (FLB) & PGW  & \textbf{UGW} \\
				\hline
				surf-C $\rightarrow$ surf-C  & 0.1   & \textbf{89.9} & 84.9 & 83.9   & surf-C $\rightarrow$ decaf-C & 0.1   & 85.0 & 85.1 & \textbf{85.6}   \\
				surf-C $\rightarrow$ surf-A  & 0.1   & 81.8 & 82.2 & \textbf{83.5}   & surf-C $\rightarrow$ decaf-A & 0.1   & 84.2 & \textbf{87.1} & 83.6   \\
				surf-C $\rightarrow$ surf-W  & 0.1   & \textbf{81.9} & 81.3 & 80.3   & surf-C $\rightarrow$ decaf-W & 0.1   & 86.2 & \textbf{88.6} & 86.8  \\
				surf-C $\rightarrow$ surf-D  & 0.1   & 80.0 & 81.4 & \textbf{83.2}   & surf-C $\rightarrow$ decaf-D & 0.1   & 84.7 & \textbf{91.1} & 90.7   \\
				\hline
				surf-C $\rightarrow$ surf-C  & 0.2   & \textbf{79.7} & 75.7 & 75.4   & surf-C $\rightarrow$ decaf-C & 0.2   & 74.8 & 75.6 & \textbf{75.9}   \\
				surf-C $\rightarrow$ surf-A  & 0.2   & 65.6 & 66.0 & \textbf{76.4}   & surf-C $\rightarrow$ decaf-A & 0.2   & 76.2 & \textbf{87.9} & 82.4   \\
				surf-C $\rightarrow$ surf-W  & 0.2   & 65.1 & 64.3 & \textbf{67.3}   & surf-C $\rightarrow$ decaf-W & 0.2   & 81.5 & 88.4 & \textbf{89.9}   \\
				\hline
				decaf-C $\rightarrow$ decaf-C & 0.1   & \textbf{93.9} & 83.0 & 86.8   & decaf-C $\rightarrow$ surf-C  & 0.1   & \textbf{81.7} & 81.0 & 81.1   \\
				decaf-C $\rightarrow$ decaf-A & 0.1   & 80.1 & 81.4 & \textbf{85.6}   & decaf-C $\rightarrow$ surf-A  & 0.1   & 80.9 & 81.2 & \textbf{82.4}  \\
				decaf-C $\rightarrow$ decaf-W & 0.1   & 80.1 & 82.7 & \textbf{86.1}   & decaf-C $\rightarrow$ surf-W  & 0.1   & 82.0 & 81.3 & \textbf{83.5}   \\
				decaf-C $\rightarrow$ decaf-D & 0.1   & 80.6 & \textbf{83.8} & 83.4   & decaf-C $\rightarrow$ surf-D  & 0.1   & 80.0 & 80.8 & \textbf{81.5}   \\
				\hline
				decaf-C $\rightarrow$ decaf-C & 0.2   & \textbf{90.6} & 76.7 & 80.5   & decaf-C $\rightarrow$ surf-C  & 0.2   & \textbf{66.6} & 63.7 & 65.2   \\
				decaf-C $\rightarrow$ decaf-A & 0.2   & 62.5 & 68.7 & \textbf{74.7}   & decaf-C $\rightarrow$ surf-A  & 0.2   & 62.9 & 62.4 & \textbf{69.3}   \\
				decaf-C $\rightarrow$ decaf-W & 0.2   & 65.7 & 75.9 & \textbf{79.2}   & decaf-C $\rightarrow$ surf-W  & 0.2   & 65.1 & 61.4 & \textbf{83.3}  \\
				\hline
			\end{tabular}
		}
	\end{center}
	\caption{Accuracy for all tasks. The left block are domain adaptation experiments with similar features, where both PGW and UGW are initialised with PW. The right block are domain adaptation experiments with different features, and the reported init is FLB (see Appendix~\ref{sec-app-xp}) used for UGW.}
	\label {table:data-perf}
\end{table}

\section{Conclusion and perspectives}

This paper defines two Unbalanced Gromov-Wasserstein formulations: $\CGW$ and $\UGW$. We prove that they are both positive and definite. We provide a scalable, GPU-friendly algorithm to compute $\UGW$ illustrate its applicability in learning tasks, and show that $\CGW$ is a distance between mm-spaces up to isometry. 
These divergences and distances allow for the first time to blend in a seamless way the transportation geometry of $\GW$ with creation and destruction of mass. 
This hybridization is the key to unlock both theoretical and practical issues.
This work opens new questions for future works, for instance removing the bias introduced by the use of entropic regularization, which is important for applications to ML. Note that such a debiasing was successfully applied for Balanced-GW in~\citet{bunne2019learning} and is shown to lead to a valid divergence for balanced OT in~\citet{feydy2019interpolating} and UW in~\citet{sejourne2019sinkhorn}.
The design of efficient numerical solvers for $\CGW$ is also an interesting avenue for future works, as well as the study of its induced topology.
%

\section*{Acknowledgements}

The works of Thibault S\'ejourn\'e and Gabriel Peyr\'e is supported by the ERC grant NORIA.
The work of G. Peyré was supported in part by the French government under management of Agence Nationale de la Recherche as part of the "Investissements d’avenir" program, reference ANR19-P3IA-0001 (PRAIRIE 3IA Institute).

The authors thank Rémi Flamary for his remarks and advices, as well as Laetitia Chapel for her help to reproduce her experiments.

\bibliography{biblio}

\appendix
\onecolumn
\section{Background on unbalanced optimal transport}
\label{sec-app-background}

Following~\citet{liero2015optimal}, this section reviews and generalizes the homogeneous and conic formulations of unbalanced optimal transport. These three formulations are equal in the convex setting of UOT. Our relaxed divergence UGW and conic distance CGW defined in Section~\ref{sec-distance} build upon those constructions but are not anymore equal due to the non-convexity of GW problems.

\subsection{Homogeneous formulation}

To ease the description of the homogeneous formulation, we develop and refactor the Csiszàr divergence terms of~\eqref{eq-uw} in a form analog to Lemma~\ref{lem-rewrite-ugw}. It reads
\begin{equation}
	\begin{aligned}
	\text{UW}(\mu,\nu)^q = \uinf{\pi \in \Mm(X^2)} &\int L_{\C(d(x,y))}(\f(x), \g(y))\d\pi(x,y)
	+\psi^\prime_\infty(|\mu^\bot| + |\nu^\bot|),
	\end{aligned}
\end{equation}
where $L_{c}(r,s) \eqdef c + r\phi(1/r) + s\phi(1/s)$, $|\mu^\bot|\eqdef\mu^\bot(X)$ and $(\f \eqdef \frac{\d\mu}{\d\pi_1}, \g \eqdef \frac{\d\nu}{\d\pi_2})$ are the densities of the Lebesgue decomposition of $(\mu,\nu)$ with respect to $(\pi_1,\pi_2)$ and
\begin{equation}
	\mu = \f \pi_1 + \mu^\bot \qandq \nu = \g \pi_2 + \nu^\bot.  \label{eq-leb-dens-1}
\end{equation}
Such form is helpful to explicit the terms of pure mass creation/destruction $(|\mu^\bot| + |\nu^\bot|)$ and reinterpret the integral under $\pi$ as a transport term with a new cost $L_{\C(d)}$.

Then the authors of~\citet{liero2015optimal} define the homogeneous formulations HUW as
\begin{equation}
	\begin{aligned}\label{eq-uw-homog}
	\text{HUW}(\mu,\nu)^q \eqdef \uinf{\pi \in \Mm(X^2)} &\int H_{\C(d(x,y))}(\f(x), \g(y))\d\pi(x,y)
	+\psi^\prime_\infty(|\mu^\bot| + |\nu^\bot|),
	\end{aligned}
\end{equation}
 where the 1-homogeneous function $H_c$ is the perspective transform of $L_c$
\begin{align}
	H_c(r, s) \eqdef \inf_{\theta\geq 0} \theta\big( c + \psi(\tfrac{r}{\theta}) + \psi(\tfrac{s}{\theta} ) \big) 
	= \inf_{\theta\geq 0} \theta L_c(\tfrac{r}{\theta}, \tfrac{s}{\theta}).
\end{align}
By definition one has $L_c\geq H_c$, though after optimization one has $\text{\upshape{UW}}=\text{\upshape{HUW}}$.

\subsection{Cone sets, cone distances and explicit settings}
\label{sec-setups}

The conic formulation detailed in Section~\ref{sec-conic-uw} is obtained by performing the optimal transport on the cone set $\Co[X] \eqdef X\times\RR_+ / (X\times\{0\})$, where the extra coordinate accounts for the mass of the particle.
Coordinates of the form $(x,0)$ are merged into a single point called the apex of the cone, noted $\zc_X$. In the sequel, points of $X\times\RR_+$ are noted $(x,r)$ and those of $\Co[X]$ are noted $[x,r]$ to emphasize the quotient operation at the apex.

For a pair $(p,q)\in\RR_+$, we define for any $[x,r],[y,s]\in\Co[X]^2$
\begin{align}
	\Dd_{\Co[X]}([x,r], [y,s])^q \eqdef H_{\C(d(x,y))}(r^p, s^p).
\end{align}
In general $\Dd_{\Co[X]}$ is not a distance, but it is always definite as proved by the following result described in~\cite{de2019metric}.
\begin{proposition}
	Assume that $d$ is definite, $\C^{-1}(\{0\})=\{0\}$ and $\phi^{-1}(\{0\})=\{1\}$. Assume also that for any $(r,s)$, there always exists $\theta^*$ such that $H_c(r, s)= \theta^* L_c(\tfrac{r}{\theta^*}, \tfrac{s}{\theta^*})$.
	Then $\Dd_{\Co[X]}$ is definite on $\Co[X]$, i.e. $\Dd_{\Co[X]}([x,r], [y,s]) =0$ if and only if $(r=s=0) \;\textrm{or}\; (r=s \;\textrm{and}\; x=y)$.
\end{proposition}
\begin{proof}
	Assume $\Dd_{\Co[X]}([x,r], [y,s]) =0$, and write $\theta^*$ such that
	\begin{align*}
		\Dd_{\Co[X]}([x,r], [y,s])^q = \theta^* L_c(\tfrac{r^p}{\theta^*}, \tfrac{s^p}{\theta^*})
		= \theta^* \C(d(x,y)) + r^p\phi(\tfrac{\theta^*}{r^p}) + s\phi(\tfrac{\theta^*}{s^p}),
	\end{align*}
	where the last line is given by the definition of reverse entropy.
	There are two cases. If $\theta^* >0$, since all terms are positive, there are all equal to $0$. By definiteness of $d$ it yields $x=y$ and because $\phi^{-1}(\{0\})=\{1\}$ we have $r^p=s^p=\theta^*$ and $r=s$.
	If $\theta^*=0$ then $\Dd_{\Co[X]}([x,r], [y,s])^q=\phi(0)(r^p+s^p)$. The assumption $\phi^{-1}(\{0\})=\{1\}$ implies $\phi(0)>0$, thus necessarily $r=s=0$.
\end{proof} 
 
 The function $H_c$ can be computed in closed form for a certain number of common entropies $\phi$, and we refer to~\citet[Section 5]{liero2015optimal} for an overview. 
 Of particular interest are those $\phi$ where $\Dd_{\Co[X]}$ is a distance, which necessitates a careful choice of $\C,p$ and $q$. We now detail three particular settings where this is the case. In each setting we provide $(\D_\phi, \C, p, q)$ and its associated cone distance $\Dd_{\Co[X]}$. 

\paragraph{Gaussian Hellinger distance}

It corresponds to
\begin{align*}
	\D_\phi = \KL, \quad \C(t) = t^2 \qandq q=p=2,\\
	\Dd_{\Co[X]}([x,r], [y,s])^2 = r^2 + s^2 - 2rse^{-d(x,y) / 2},
\end{align*}
in which case it is proved in~\citet{liero2015optimal} that $\Dd_{\Co[X]}$ is a cone distance.

\paragraph{Hellinger-Kantorovich / Wasserstein-Fisher-Rao distance}

It reads
\begin{align*}
	\D_\phi =\KL, \quad \C(t) = -\log\cos^2(t\wedge\tfrac{\pi}{2}) \qandq q=p=2,\\
	\Dd_{\Co[X]}([x,r], [y,s])^2 = r^2 +s^2 - 2rs\cos(\tfrac{\pi}{2}\wedge d(x,y)),
\end{align*}
in which case it is proved in~\citet{burago2001course} that $\Dd_{\Co[X]}$ is a cone distance.

The weight $\C(t) = -\log\cos^2(t\wedge\tfrac{\pi}{2})$, which might seem more peculiar, is in fact the penalty that makes unbalanced OT a length space induced by the Gaussian-Hellinger distance (if the ground metric $d$ is itself geodesic), as proved in~\citet{liero2016optimal,chizat2018interpolating}.
This weight introduces a cut-off, because $\C(d(x,y))=+\infty$ if $d(x,y)>\pi/2$. There is no transport between points too far from each other. The choice of $\pi/2$ is arbitrary, and can be modified by scaling $\la \mapsto \la(\cdot/s)$ for some cutoff $s$.

\paragraph{Partial optimal transport}
It corresponds to
\begin{align*}
	\D_\phi = \TV, \quad \C(t)=t^q \qandq q\geq 1 \qandq p=1,\\
	\Dd_{\Co[X]}([x,r], [y,s])^q = r + s - (r\wedge s)(2-d(x,y)^q)_+,
\end{align*}
in which case it is proved in~\citet{chizat2018unbalanced} that $\Dd_{\Co[X]}$ is a cone distance.
The case $\D_\phi=\TV$ is equivalent to partial unbalanced OT, which produces discontinuities (because of the non-smoothness of the divergence) between regions of the supports which are being transported and regions where mass is being destroyed/created.
Note that~\citet{liero2015optimal} do not mention that this $\Dd_{\Co[X]}$ defines a distance, so this result is new to the best of our knowledge, although it can be proved without a conic lifting that partial OT defines a distance as explained in~\citet{chizat2018unbalanced}.

\subsection{Conic formulation of UW}
\label{sec-conic-uw}

The last formulation reinterprets UW as an OT problem on the cone, with the addition of two linear constraints. Informally speaking, $H_c$ becomes $\Dd_{\Co[X]}$, the term $(|\mu^\bot| + |\nu^\bot|)$ is taken into account by the constraints~\eqref{eq-uw-conic-set} below, and the variables $(\f,\g)$ are replaced by $(r^p,s^p)$. It reads
\begin{align}
\text{CUW}(\mu,\nu)^q \eqdef \inf_{\al\in \Uu_p(\mu,\nu)} \int \Dd_{\Co[X]}([x, r], [y, s]))^q\d\al([x,r], [y,s]),
\end{align}
where the constraint set $\Uu_{p}(\mu,\nu)$ is defined as
\begin{align}\label{eq-uw-conic-set}
\Uu_{p}(\mu,\nu)  \eqdef \enscond{ \al\in\Mm_+(\Co[X]^2) }{
	\int_{\RR_+} r^p \d\al_1(\cdot,r)=\mu,
	\int_{\RR_+} s^p \d\al_2(\cdot,s)=\nu
}.
\end{align}
Thus CUW consists in minimizing the Wasserstein distance $\text{W}_{\Dd_{\Co[X]}}(\al_1,\al_2)$ on the cone $(\Co[X], \Dd_{\Co[X]})$. The additional constraints on $(\al_1,\al_2)$ mean that the lift of the mass on the cone must be consistent with the total mass of $(\mu,\nu)$. When $\Dd_{\Co[X]}$ is a distance, CUW inherits the metric properties of $W_{\Dd_{\Co[X]}}$. Our theoretical results rely on an analog construction for GW.

The following proposition states the equality of the three formulations and recapitulates its main properties. The proofs are detailed in~\citet{liero2015optimal}.
\begin{proposition}[From~\citet{liero2015optimal}]
	One has $\text{\upshape{UW}}=\text{\upshape{HUW}}=\text{\upshape{CUW}}$, which are symmetric, positive and definite.
	Furthermore, if $(X,d_X)$ and $(\Co[X], \Dd_{\Co[X]})$ are metric spaces with $X$ separable, then $\Mm_+(X)$ endowed with $\text{\upshape{CUW}}$ is a metric space.
\end{proposition}
\begin{proof}
	The equality $\text{UW}=\text{HUW}$ is given by~\citet[Theorem 5.8]{liero2015optimal}, while the equality $\text{HUW}=\text{CUW}$ holds thanks to~\citet[Theorem 6.7 and Remark 7.5]{liero2015optimal}, where the latter theorem can be straightforwardly generalized to any cone distance built as in Section~\ref{sec-setup-dist}. Since $\Dd_{\Co[X]}$ is symmetric, positive and definite (see Proposition~\ref{prop-cone-dist-definite}), then so is CUW. Furthermore, if $\Dd_{\Co[X]}$ satisfies the triangle inequality, separability of $X$ allows to apply the gluing lemma~\citep[Corollary 7.14]{liero2015optimal} which generalizes to any exponent $p$ defining $\Uu_{p}(\mu,\nu)$ and any cone distance $\Dd_{\Co[X]}$.
\end{proof}

\newpage
\section{UGW formulation and definiteness}
\label{appendix-distance-ugw}
We present in this section the proofs of the properties of our divergence $\UGW$. We refer to Section~\ref{sec-distance} for the definition of the $\UGW$ formulation and its related concepts. For conciseness we write $\Gamma(x,x',y,y') = |d_X(x,x') - d_Y(y,y')|$.

We first start with the existence of minimizers stated in Proposition~\ref{thm-exist-minimizer}. It illustrates in some sense that our divergence is well-defined.
\begin{proposition}[Existence of minimizers]
Assume $(\Xx,\Yy)$ to be compact mm-spaces and that we either have
\begin{enumerate}
  \item $\phi$ superlinear, i.e $\phi^\prime_\infty=\infty$
  \item $\C$ has compact sublevel sets in $\RR_+$ and $2\phi^\prime_\infty + \inf \C >0$
\end{enumerate}
Then there exists $\pi\in\Mm_+(X\times Y)$ such that $\UGW(\Xx,\Yy)=\Ll(\pi)$.
\end{proposition}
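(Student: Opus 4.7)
The plan is to apply the direct method of the calculus of variations: pick a minimizing sequence, extract a weak-$*$ accumulation point, and verify weak-$*$ lower semicontinuity of $\Ll$. Since the infimum is finite under either assumption (one can check it by evaluating $\Ll$ at $\pi \propto \mu \otimes \nu$), one fixes a minimizing sequence $(\pi_n)_n \subset \Mm_+(X\times Y)$ with $\Ll(\pi_n) \to \UGW(\Xx,\Yy)$.

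The key technical step is to bound the total masses $m(\pi_n)$. By Fenchel duality (taking a constant test function) one has the linear lower bound
$$
\D_\phi(\alpha \mid \beta) \;\geq\; \lambda\, m(\alpha) \;-\; \phi^*(\lambda)\, m(\beta),\qquad \lambda \in \operatorname{dom}(\phi^*),
$$
which is valid for every $\lambda < \phi'_\infty$ (and thus $\phi^*(\lambda) < \infty$). Applied to the marginal quadratic divergences with $\alpha = \pi_{n,i}\otimes\pi_{n,i}$, and combined with the trivial bound $\int \C(\Gamma)\,d\pi\otimes d\pi \geq (\inf \C)\, m(\pi)^2$, one obtains
$$
\Ll(\pi)\;\geq\;(\inf \C + 2\lambda)\, m(\pi)^2 \;-\; \phi^*(\lambda)\bigl(m(\mu)^2 + m(\nu)^2\bigr).
$$
In case (i), $\phi'_\infty = \infty$ so $\phi^*$ is everywhere finite and taking $\lambda$ large forces $(m(\pi_n))_n$ to be bounded. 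In case (ii) the hypothesis $2\phi'_\infty + \inf \C > 0$ guarantees the existence of $\lambda < \phi'_\infty$ with $\inf \C + 2\lambda > 0$, giving the same conclusion.

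Compactness of $X\times Y$ and Banach--Alaoglu then extract a weak-$*$ convergent subsequence $\pi_n \rightharpoonup \pi^*$. To conclude, one shows weak-$*$ lower semicontinuity of each part of $\Ll$. For the quadratic distortion term, a standard Stone--Weierstrass density argument applied to tensor products of continuous test functions on the compact set $(X\times Y)^2$ gives $\pi_n \otimes \pi_n \rightharpoonup \pi^*\otimes \pi^*$; since $\C\circ\Gamma$ is non-negative and lower semicontinuous on a compact set (bounded in the settings of Section~\ref{sec-setups}, and handled by truncation otherwise), one gets $\liminf_n \int \C(\Gamma)\,d\pi_n\otimes d\pi_n \geq \int \C(\Gamma)\,d\pi^*\otimes d\pi^*$. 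For the marginal penalties, weak-$*$ continuity of marginalization yields $\pi_{n,i}\rightharpoonup \pi^*_i$, hence $\pi_{n,i}\otimes\pi_{n,i} \rightharpoonup \pi^*_i\otimes\pi^*_i$; the weak-$*$ lower semicontinuity of $\D_\phi$ recalled after~(\ref{eq-defn-csiszar}) then delivers the inequality for $\D_\phi^\otimes$. Summing gives $\Ll(\pi^*) \leq \liminf_n \Ll(\pi_n) = \UGW(\Xx,\Yy)$, so $\pi^*$ is a minimizer.

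The main obstacle is the coercivity step. The quadratic (non-convex) structure of both the cost and of the divergence means that one has to exploit the linearized lower bound on $\D_\phi$ on the \emph{tensorized} marginals; the disjunctive hypothesis (i) or (ii) is precisely tailored so that the quadratic coefficient $\inf \C + 2\lambda$ can be made strictly positive for some admissible $\lambda$. Once this coercivity is secured, the remaining arguments are standard weak-$*$ compactness and lower semicontinuity facts, leveraging compactness of $X$ and $Y$ to avoid any mass escape at infinity.
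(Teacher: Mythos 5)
Your proof is correct and follows the same direct-method template as the paper: establish coercivity in total mass, invoke Banach--Alaoglu on the compact product space, and verify weak-$*$ lower semicontinuity of each term of $\Ll$. The one substantive difference is the coercivity step. The paper extracts the quadratic $m(\pi)^2$ by applying Jensen's inequality to the convex $\phi$, obtaining $\Ll(\pi)\geq m(\pi)^2\bigl[\inf\C + \tfrac{m(\mu)^2}{m(\pi)^2}\phi\bigl(\tfrac{m(\pi)^2}{m(\mu)^2}\bigr) + \tfrac{m(\nu)^2}{m(\pi)^2}\phi\bigl(\tfrac{m(\pi)^2}{m(\nu)^2}\bigr)\bigr]$ and then letting $m(\pi)\to\infty$ so the bracket tends to $2\phi'_\infty+\inf\C$. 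You instead linearize the divergence from below by a single Fenchel--Young minorant $\D_\phi(\alpha\mid\beta)\geq\lambda\,m(\alpha)-\phi^*(\lambda)\,m(\beta)$ and pick one admissible $\lambda$ with $\inf\C+2\lambda>0$. Since the paper's Jensen bound is exactly $\sup_\lambda$ of your affine family, the two estimates are equivalent, but your version isolates a single minorant and avoids the asymptotic argument, which arguably makes the case split between $\phi'_\infty=\infty$ and $\phi'_\infty<\infty$ cleaner to read. You also unpack the lower semicontinuity of each term (weak-$*$ continuity of tensorization and marginalization on a compact space, Portmanteau for the nonnegative l.s.c. integrand $\C\circ\Gamma$, and weak-$*$ l.s.c. of $\D_\phi$) more explicitly than the paper, which simply asserts that $\Ll$ is l.s.c. as a sum of l.s.c. terms; the parenthetical about truncation/boundedness is unnecessary since the liminf inequality already holds for any nonnegative l.s.c. integrand.
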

\begin{proof}
We adapt here from ~\citet[Theorem 3.3]{liero2015optimal}. The functional is lower semi-continuous as a sum of l.s.c terms.
 Thus it suffices to have relative compactness of the set of minimizers. Under either one of the assumptions, coercivity of the functional holds thanks to Jensen's inequality
\begin{align*}
	\Ll(\pi)&\geq m(\pi)^2\inf\C(\Gamma) +  m(\mu)^2 \phi(\frac{m(\pi)^2}{m(\mu)^2}) +  m(\nu)^2 \phi(\frac{m(\pi)^2}{m(\nu)^2})\\
	&\geq m(\pi)^2 \Big[ \inf\C(\Gamma) +  \frac{m(\mu)^2}{m(\pi)^2} \phi(\frac{m(\pi)^2}{m(\mu)^2}) +  \frac{m(\nu)^2}{m(\pi)^2} \phi(\frac{m(\pi)^2}{m(\nu)^2})\Big].
\end{align*}
As $m(\pi)\rightarrow +\infty$ the right hand side converges to $2\phi^\prime_\infty + \inf \C >0$, which under either one of the assumptions yields $\Ll(\pi)\rightarrow +\infty$, hence the coercivity.
Thus we can assume there exists some $M$ such that $m(\pi)<M$. Since the spaces are assumed to be compact, the Banach-Alaoglu theorem holds and gives relative compactness in $\Mm_+(X\times Y)$.

Take any sequence of plans $\pi_n$ that approaches $\UGW(\Xx,\Yy)=\inf \Ll(\pi)$. Compactness gives that a subsequence $\pi_{n_k}$ weak* converges to some $\pi^*$. Because $\Ll$ is l.s.c, we have $\Ll(\pi^*) \leq \inf \Ll(\pi)$, thus $\Ll(\pi^*) = \inf \Ll(\pi)$. The existence of such limit reaching the infimum gives the existence of a minimizer.
\end{proof}

Note that this formulation is nonegative and symmetric because the functional $\Ll$ is also nonegative and symmetric in its inputs $(\Xx,\Yy)$. This formulation allows straightforwardly to prove the definiteness of $\UGW$.

\begin{proposition}[Definiteness of $\UGW$]\label{thm-ugw-definite}
Assume that $\phi^{-1}(\{0\})=\{1\}$ and $\C^{-1}(\{0\})=\{0\}$.
The following assertions are equivalent:
\begin{enumerate}
  \item $\UGW(\Xx,\Yy)=0$
  \item $\exists\pi\in\Mm_+(X\times Y)$ whose marginals are $(\mu,\nu)$ such that $d_X(x, x')= d_Y(y, y')$ for $\pi\otimes\pi$-a.e. $(x, x', y, y')\in (X\times Y)^2$.
  \item There exists a mm-space $(Z, d_Z, \eta)$ with full support and Borel maps $\psi_X:Z\rightarrow X$ and $\psi_Y:Z\rightarrow Y$. such that $(\psi_X)_\sharp \eta =\mu$, $(\psi_Y)_\sharp \eta =\nu$ and $d_Z = (\psi_X)^\sharp d_X = (\psi_Y)^\sharp d_Y$
  \item There exists a Borel measurable bijection between the measures' supports $\psi:spt(\mu)\rightarrow spt(\nu)$ with Borel measurable inverse such that $\psi_\sharp\mu = \nu$ and $d_Y = \psi^\sharp d_X$.
\end{enumerate}
\end{proposition}
\begin{proof}
Recall that $(2) \Leftrightarrow (3) \Leftrightarrow (4)$ from~\citet[Lemma 1.10]{sturm2012space}. thus it remains to prove $(1)\Leftrightarrow(2)$.

If there is such coupling plan $\pi$ between $(\mu,\nu)$ then one has $\pi\otimes\pi$-a.e. that $\Gamma=0$, and all $\phi$-divergences are zero as well, yielding a distance of zero a.e.

Assume now that $\UGW(\Xx,\Yy)=0$, and write $\pi$ an optimal plan. All terms of $\Ll$ are positive, thus under our assumptions we have $\Gamma=0$, $\pi_1\otimes\pi_1=\mu\otimes\mu$ and $\pi_2\otimes\pi_2=\nu\otimes\nu$. Thus we get that $\pi$ has marginals $(\mu,\nu)$ and that $d_X(x, x')= d_Y(y, y')$ $\pi\otimes\pi$-a.e.
\end{proof}

We end with a result on the reformulation of $\UGW$ which is the first step to connnect it with the conic formulation $\CGW$. It is the same proof as in the main body.
\begin{lemma}
	Defining $L_{c}(r,s) \eqdef c + r\phi(1/r) + s\phi(1/s)$, and writing $(\f \eqdef \frac{\d\mu}{\d\pi_1}, \g \eqdef \frac{\d\nu}{\d\pi_2})$ the Lebesgue densities of $(\mu,\nu)$ w.r.t. $(\pi_1,\pi_2)$ such that
	$\mu = \f \pi_1 + \mu^\bot$ and $\nu = \g \pi_2 + \nu^\bot$, one has
	\begin{align*}
	\Ll(\pi)= &\int_{X^2 \times Y^2} L_{\C(\Gamma)}(\f\otimes\f,\g\otimes\g)\d\pi\d\pi+\phi(0)(|(\mu\otimes\mu)^\bot| + |(\nu\otimes\nu)^\bot|).
	\end{align*}
\end{lemma}
\begin{proof}
	Using Equation~\eqref{eq-tensor-leb-dens}, one has
	\begin{align*}
	\Ll(\pi) &= \int_{X^2 \times Y^2} \C(\Gamma)\d\pi\d\pi
	+ \D_\phi^\otimes(\pi_1|\mu) + \D_\phi^\otimes(\pi_2|\nu)\\
	&= \int_{X^2 \times Y^2} \C(\Gamma)\d\pi\d\pi
	+ \D_\psi^\otimes(\mu|\pi_1) + \D_\psi^\otimes(\nu|\pi_2)\\
	&= \int_{X^2 \times Y^2} \C(\Gamma)\d\pi\d\pi + \int_{X^2}\psi(\f\otimes\f)\d\pi_1\d\pi_1 + \int_{Y^2}\psi(\g\otimes\g)\d\pi_2\d\pi_2\nonumber\\
	&\qquad+\phi(0)(|(\mu\otimes\mu)^\bot| + |(\nu\otimes\nu)^\bot|)\\
	&= \int_{X^2 \times Y^2} L_{\C(\Gamma)}(\f\otimes\f,\g\otimes\g)\d\pi\d\pi+\phi(0)(|(\mu\otimes\mu)^\bot| + |(\nu\otimes\nu)^\bot|).\nonumber
	\end{align*}
\end{proof}

\section{Conic formulation and metric properties}
\label{appendix-distance-cgw}

We present in this section the proofs of the properties mentioned in Section~\ref{sec-distance}. We refer to Section~\ref{sec-distance} and Appendix~\ref{sec-app-background} for the definition of the conic formulation and its related concepts.

In this section we frequently use the notion of marginal for neasures. For any sets $E,F$, we write $\margp{E}:E\times F\rightarrow E$ the \textbf{canonical projection} such that for any $(x,y)\in E\times F,\, \margp{E}(x,y)=x$.  Consider two complete separable mm-spaces $\Xx = (X, d_X, \mu)$ and $\Yy=(Y, d_Y, \nu)$. Write $\pi\in\Mm_+(X\times Y)$ a coupling plan, and define its marginals by $\pi_1 = \margp{X}_\sharp\pi$ and $\pi_2 = \margp{Y}_\sharp\pi$. The definition of the marginals can also be seen by the use of test functions. In the case of $\pi_1$ it reads for any test function $\xi$
\begin{align*}
	\int \xi(x)\d\pi_1(x) = \int \xi(x) \d\pi(x,y).
\end{align*}

\subsection{Preliminary results}
We present in this section concepts and properties which are necessary for the proof of Theorem~\ref{thm-ugw-dist}. We introduce a dilation operator whose role is to rescale the radial coordinate of a measure with a given scaling.

\begin{definition}[dilations]
	Consider $v([x, r], [y, s])$ a Borel measurable scaling function depending on $[x, r], [y, s]\in\Co[X]\times\Co[Y]$. Take a plan $\al\in\Mm_+(\Co[X]\times\Co[Y])$. We define the dilation $\dil{v}: \al\mapsto (h_v)_\sharp(v^{p}\al)$ where
	\begin{align*}
		h_v([x, r], [y, s]) \eqdef ([x, r/w], [y, s/w]),
	\end{align*}
	where $w = v([x, r], [y, s])$. It reads for any test function $\xi$
	\begin{align*}
		\int \xi([x, r], [y, s])\d\dil{v}(\al) = \int\xi([x, r/w], [y, s/w])w^p \d\al.
	\end{align*}
\end{definition}

The importance of dilations is given by the following lemma.

\begin{lemma}[Invariance to dilation]
	The problem $\CGW$ is invariant to dilations, i.e. for any $\al\in\Uu_p(\mu,\nu)$, we have $\dil{v}(\al)\in\Uu_p(\mu,\nu)$ and $\Hh(\al) = \Hh(\dil{v}(\al))$.
\end{lemma}
\begin{proof}
	First we prove the stability of $\Uu_p(\mu,\nu)$ under dilations. Take $\al\in\Uu_p(\mu,\nu)$. For any test function $\xi$ defined on $X$ we have
	\begin{align*}
		\int \xi(x)r^p\d\dil{v}(\al) = \int \xi(x)(\frac{r}{v})^p.v^p\d(\al) = \int\xi(x)r^p\d\al = \int\xi(x)\d\mu(x).
	\end{align*}
Similarly we get $\margp{Y}_\sharp(s^q \dil{v}(\al)) = \nu$, thus $\dil{v}(\al)\in\Uu_p(\mu,\nu)$.

It remains to prove the invariance of the functional. Recall that $\Dd^q$ is p-homogeneous. It yields
\begin{align*}
	\Hh(\dil{v}(\al)) &= \int \Dd([d_X(x,x'), rr'], [d_Y(y,y'), ss']))^q\d\dil{v}(\al)\d\dil{v}(\al)\\
	&= \int\Dd([d_X(x,x'), \frac{r}{v}\cdot\frac{r'}{v}], [d_Y(y,y'), \frac{s}{v}\cdot\frac{s'}{v}]))^q v^{p}\cdot v^p\d\al \d\al\\
	&= \int\frac{1}{v^{2p}}\Dd([d_X(x,x'), rr'], [d_Y(y,y'), ss']))^q v^{2p}\d\al \d\al\\
	&= \int\Dd([d_X(x,x'), rr'], [d_Y(y,y'), ss']))^q \d\al \d\al\\
	& = \Hh(\al)
\end{align*}
Both the functional and the constraint set are invariant, thus the whole CGW problem is invariant to dilations.
\end{proof}

The above lemma allows to normalize the plan such that one of its marginal is fixed to some value. Fixing a marginal allows to generalize the gluing lemma which is a key ingredient of the triangle inequality in optimal transport.
\begin{lemma}[Normalization lemma]\label{lem-norm}
	Assume there exists $\al\in\Uu_p(\mu,\nu)$ such that $\CGW(\Xx,\Yy)=\Hh(\al)$. Then there exists $\tilde{\al}$ such that $\tilde{\al}\in\Uu_p(\mu,\nu)$ and $\CGW(\Xx,\Yy)=\Hh(\tilde{\al})$ and whose marginal on $\Co[Y]$ is $\nu_{\Co[Y]}=\margp{\Co[Y]}\sharp\tilde{\al} = \delta_{\zc_Y} + \margc_\sharp(\nu \otimes \delta_1)$, where $\margc$ is the canonical injection from $Y\times\RR_+$ to $\Co[Y]$.
\end{lemma}
\begin{proof}
	The proof is exactly the same as~\citet[Lemma 7.10]{liero2015optimal} and is included for completeness. Take an optimal plan $\al$. Because the functional and the constraints are homogeneous in $(r,s)$, the plan $\hat{\al} = \al + \delta_{\zc_X}\otimes\delta_{\zc_Y}$ verifies $\hat{\al}\in\Uu_p(\mu,\nu)$ and $\Hh(\hat{\al}) = \Hh(\al)$. Indeed, because of this homogeneity the contribution $\delta_{\zc_X}\otimes\delta_{\zc_Y}$ has $(r,s)=(0,0)$ which has thus no impact.
	
	Considering $\hat{\al}$ instead of $\al$ allows to assume without loss of generality that the transport plan charges the apex, i.e. setting
	\begin{align}
	S = \{[x,r],[y,s]\in\Co[X]\times\Co[Y], [y,s]=\zc_Y\},
	\end{align}
	one has $\omega_Y \eqdef \hat{\al}(S) \geq 1$.
Then we can define the following scaling
\begin{align}
	v([x,r], [y,s]) = 
	\begin{cases}
		s \textrm{  if  }s>0\\
		\omega_Y^{-1/q} \textrm{  otherwise}.
	\end{cases}
\end{align}

We prove now that $\dil{v}(\hat{\al})$ has the desired marginal on $\Co(Y)$ by considering test functions $\xi([y,s])$. We separate the integral into two parts with the set $S$, and write $\hat{\al} = \rest{\hat{\al}}{S} + \rest{\hat{\al}}{S^c}$ their restrictions to $S$ and $S^c$ respectively.
It reads
\begin{align*}
	\int \xi([y,s])\d\dil{v}(\hat{\al}) &= \int\xi([y,s / v])v^p\d\hat{\al}\\
	& = \int\xi([y,s / v])v^p\d\rest{\hat{\al}}{S} + \int\xi([y,s / v])v^p\d\rest{\hat{\al}}{S^c}\\
	&=\int\xi(\zc_Y)\omega_Y^{-1}\d\rest{\hat{\al}}{S} + \int\xi([y,s / s])s^p\d\rest{\hat{\al}}{S^c}\\
	& = \xi(\zc_Y)\cdot\omega_Y\cdot\omega_Y^{-1} + \int\xi([y,1])s^p\d\hat{\al}\\
	& = \xi(\zc_Y) + \int\xi(\margc(y,s))\d(\nu(y)\otimes\delta_1(s))\\
	& = \int\xi([y,s])\d(\delta_{\zc_Y} + \margc_\sharp(\nu \otimes \delta_1)),
\end{align*}
which is the formula of the desired marginal on $\Co[Y]$. Since $\hat{\al}\in\Uu_p(\mu,\nu)$, its dilation is also in $\Uu_p(\mu,\nu)$, and $\Hh(\al) = \Hh(\hat{\al})=\Hh(\dil{v}(\hat{\al}))$.
\end{proof}

\subsubsection{Proof of Theorem~\ref{thm-ugw-dist}}

\emph{Non-negativity} and \emph{symmetry} hold since $\Hh$ is a sum of non-negative symmetric terms.
To prove \emph{Definiteness}, assume $\CGW(\Xx,\Yy)=0$, and write $\al$ an optimal plan. We have $\al\otimes\al$-a.e. that $d_X(x,x')=d_Y(y,y')$ and $rr'=ss'$ because $\Dd$ is definite (see Proposition~\ref{prop-cone-dist-definite}). Thanks to the completeness of $(\Xx,\Yy)$ and a result from~\citet[Lemma 1.10]{sturm2012space}, such property implies the existence of a Borel isometric bijection with Borel inverse between the supports of the measures $\psi:\Supp(\mu)\rightarrow\Supp(\nu)$, where $\Supp$ denotes the support. The bijection $\psi$ verifies $d_X(x,x')=d_Y(\psi(x),\psi(x'))$. To prove $\Xx\sim\Yy$ it remains to prove $\psi_\sharp\mu=\nu$. Due to the density of continuous functions of the form $\xi(x)\xi(x')$, the constraints of $\Uu_p(\mu,\nu)$ are equivalent to
\begin{align*}
	\int_{\RR_+} (rr')^p \d\al_1(\cdot,r)\d\al_1(\cdot,r')=\mu\otimes\mu,\quad
	\int_{\RR_+} (ss')^p \d\al_2(\cdot,s)\d\al_2(\cdot,s')=\nu\otimes\nu.
\end{align*}
Take a continuous test function $\xi$ defined on $\Supp(\nu)^2$. Writing $y=\psi(x)$ and $y'=\psi(x')$, one has
\begin{align*}
	\int\xi(y,y')\d\nu\d\nu &= \int\xi(y,y') (ss')^p\d\al\d\al\\
	&= \int\xi(\psi(x),\psi(x')) (ss')^p\d\al\d\al\\
	&= \int\xi(\psi(x),\psi(x')) (rr')^p\d\al\d\al\\
&= \int\xi(\psi(x),\psi(x')) \d\mu\d\mu\\
	&= \int\tilde{\xi}(x,x') \d\psi_\sharp\mu\d\psi_\sharp\mu.
\end{align*}
Since $\psi$ is a bijection, there is a bijection between continuous functions $\xi$ of $\Supp(\nu)^2$ and functions $\tilde{\xi}$ of $\Supp(\mu)^2$. Thus we obtain $\nu=\psi_\sharp\mu$ and we have $\Xx\sim\Yy$.

It remains to prove the \emph{triangle inequality}. Assume now that $\Dd$ satisfies it.
Given three mm-spaces $(\Xx,\Yy,\Zz)$ respectively equipped with measures $(\mu,\nu,\eta)$, consider $\al,\be$ which are optimal plans for $\CGW(\Xx,\Yy)$ and $\CGW(\Yy,\Zz)$. 	
Using Lemma~\ref{lem-norm} to both $\al$ and $\be$, we can consider measures $(\bar{\al},\bar{\be})$ which are also optimal and have a common marginal $\bar\nu$ on $\Co[Y]$. Thanks to this common marginal and the separability of $(X,Y,Z)$, the standard gluing lemma~\citep[Lemma 7.6]{villani2003} applies and yields a glued plan $\ga\in\Mm_+(\Co[X]\times\Co[Y]\times\Co[Z])$ whose respective marginals on $\Co[X]\times\Co[Y]$ and $\Co[Y]\times\Co[Z]$ are $(\bar{\al},\bar{\be})$. Furthermore, the marginal $\bar{\ga}$ of $\ga$ on $\Co[X]\times\Co[Z]$ is in $\Uu_p(\mu,\eta)$. Indeed, $(\bar{\ga},\bar{\al})$ have the same marginal on $\Co[X]$ and same for $(\bar{\ga},\bar{\be})$ on $\Co[Z]$, hence this property.
Write $d_X=d_X(x,x')$ for sake of conciseness (and similarly for $Y,Z$). The calculation reads
\begin{align}
	&\CGW(\Xx, \Zz)^{\tfrac{1}{q}}\\ 
	&\leq \Big(\int \Dd([d_X, rr'],[d_Z, tt'])^q\d\bar{\ga}([x,r],[z,t])\d\bar{\ga}([x',r'],[z',t'])\Big)^{\tfrac{1}{q}}\label{eq-triangle-1}\\
	\quad &\leq\Big(\int \Dd([d_X, rr'],[d_Z, tt'])^q\d\ga([x,r],[y,s],[z,t])\d\ga([x',r'],[y',s'],[z',t'])\Big)^{\tfrac{1}{q}}\label{eq-triangle-2}\\
	\quad &\leq \Big(\int (\Dd([d_X, rr'],[d_Y, ss']) + \Dd([d_Y, ss'],[d_Z, tt']))^q\d\ga\d\ga\Big)^{\tfrac{1}{q}}\label{eq-triangle-3}\\
	\quad &\leq \Big(\int \Dd([d_X, rr'],[d_Y,ss'])^q\d\ga\d\ga\Big)^{\tfrac{1}{q}}  +\Big(\int\Dd([d_Y,ss'],[d_Z,tt'])^q\d\ga\d\ga\Big)^{\tfrac{1}{q}}\label{eq-triangle-4}\\
	&\leq \Big(\int \Dd([d_X, rr'],[d_Y, ss'])^q\d\bar{\al}([x,r],[y,s])\d\bar{\al}([x',r'],[y',s'])\Big)^{\tfrac{1}{q}}\nonumber\\
	&\qquad+ \Big(\int \Dd([d_Y, ss'],[d_Z, tt'])^q\d\bar{\be}([y,s],[z,t])\d\bar{\be}([y',s'],[z',t'])\Big)^{\tfrac{1}{q}}\label{eq-triangle-5}\\
	&\leq \CGW(\Xx, \Yy)^{\tfrac{1}{q}}+ \CGW(\Yy, \Zz)^{\tfrac{1}{q}}.\label{eq-triangle-6}
\end{align}
Since $\bar{\ga}\in\Uu_p(\mu,\eta)$, it is thus suboptimal, which yields Equation~\eqref{eq-triangle-1}. Because $\bar{\ga}$ is the marginal of $\ga$ we get Equation~\eqref{eq-triangle-2}. Equations~\eqref{eq-triangle-3} and~\eqref{eq-triangle-4} are respectively obtained by the triangle and Minkowski inequalities, which hold because $\Dd$ which is a distance. Equation~\eqref{eq-triangle-5} is the marginalization of $\ga$, and Equation~\eqref{eq-triangle-6} is given by the optimality of $(\bar{\al},\bar{\be})$, which ends the proof of the triangle inequality.

\subsubsection{Proof of the inequality between UGW and CGW}
The proof consists in considering an optimal plan $\pi$ for UGW, building a lift $\al$ of this plan into the cone such that $\Ll(\pi)\geq\Hh(\al)$, and prove that $\al$ is admissible for the program CGW, thus suboptimal.

Using Equation~\eqref{eq-leb-dens-1}, we have
\begin{equation}
\begin{aligned}\label{eq-tensor-leb-dens}
\mu\otimes\mu &= (\f\otimes\f) \pi_1\otimes\pi_1 + (\mu\otimes\mu)^\bot, \\
(\mu\otimes\mu)^\bot &= \mu^\bot\otimes(\f\pi_1) + (\f\pi_1)\otimes\mu^\bot + \mu^\bot\otimes\mu^\bot,\\
\nu\otimes\nu &= (\g\otimes\g) \pi_2\otimes\pi_2 + (\nu\otimes\nu)^\bot,\\
(\nu\otimes\nu)^\bot &= \nu^\bot\otimes(\g\pi_2) + (\g\pi_2)\otimes\nu^\bot + \nu^\bot\otimes\nu^\bot.
\end{aligned}
\end{equation}

Recall that the canonic injection $\margc$ reads $\margc(x,r)=[x,r]$. Based on the above Lebesgue decomposition, we define the conic plan
\begin{equation}
\begin{aligned}\label{eq-plan-conic-lifted}
\al &= (\margc(x, \f(x)^{\frac{1}{p}}), \margc(y,\g(y)^{\frac{1}{p}}))_\sharp\pi(x,y) + \delta_{\zc_X}\otimes\margc_\sharp[\nu^\bot\otimes\de_1] + \margc_\sharp[\mu^\bot\otimes\de_1]\otimes\delta_{\zc_Y}.
\end{aligned}
\end{equation}

We have that $\al\in\Uu_p(\mu,\nu)$. Indeed for the first marginal (and similarly for the second) we have for any test function $\xi(x)$
\begin{align*}
	\int\xi(x)(r)^p\d\al &= \int\xi(x)\f(x)\d\pi_1(x) + 0 + \int\xi(x)(1)^p\d\mu^\bot(x)\\
	&=\int\xi(x)\d(\f(x)\pi_1 + \mu^\bot)\\
	&=\int\xi(x)\d\mu(x).
\end{align*}

We define $\theta^*=\theta^*_c(r,s)$ the parameter which verifies $H_c(r,s)=\theta^* L_c(r/\theta^*, s/\theta^*)$. We restrict $\al\otimes\al$ to the set $S=\{\theta^*_{\lambda(\Gamma)}((rr')^p, (ss')^p)>0\}$. By construction, $\theta^*_c(r,s)$ is 1-homogeneous in $(r,s)$. Thus on S we necessarily have $r,r',s,s' >0$. It yields
\begin{align*}
	\rest{\al\otimes\al}{S} &= (\margc(x, \f(x)^{\frac{1}{p}}), \margc(y,\g(y)^{\frac{1}{p}}),\margc(x', \f(x')^{\frac{1}{p}}), \margc(y',\g(y')^{\frac{1}{p}}))_\sharp(\pi\otimes\pi).
\end{align*}

Concerning the orthogonal part of the decomposition, note that whenever $\theta^*=0$, due to the definition of $H$ the cone distance reads 
\begin{align}
	\Dd([x,r], [y,s])^q = \phi(0)(r^p + s^p).
\end{align}
It geometrically means that the shortest path between $[x,r]$ and $[y,s]$ must pass via the apex, which corresponds to a pure mass creation/destruction regime.

Furthermore we have that
\begin{align*}
	|(\mu\otimes\mu)^\bot| &= \int (r\cdot r')^p \d\rest{(\al\otimes\al)}{S^c},\\
	|(\nu\otimes\nu)^\bot| &= \int (s\cdot s')^p \d\rest{(\al\otimes\al)}{S^c}.
\end{align*}
Indeed, thanks to Equation~\eqref{eq-plan-conic-lifted} we have for the first marginal that
\begin{align*}
	|(\mu\otimes\mu)^\bot| &= \big(\mu^\bot\otimes(\f\pi_1) + (\f\pi_1)\otimes\mu^\bot + \mu^\bot\otimes\mu^\bot\big)(X^2)\\
	&= \int (rr')^p\d\margc_\sharp[\mu^\bot\otimes\de_1]\d\margc(x', \f(x')^{\frac{1}{p}})_\sharp\pi_1(x')\\
	&\qquad+ \int (rr')^p\d\margc(x, \f(x)^{\frac{1}{p}})_\sharp\pi_1(x)\d\margc_\sharp[\mu^\bot\otimes\de_1]\\
	&\qquad+ \int (rr')^p\d\margc_\sharp[\mu^\bot\otimes\de_1]\d\margc_\sharp[\mu^\bot\otimes\de_1]\\
	&= \int (rr')^p \d\rest{(\al\otimes\al)}{S^c}.
\end{align*}
Note that the last equality holds because each term of $\al\otimes\al$ involving a measure $\delta_{\zc_{X}}$ cancels out when integrated against $(rr')^p$.

Eventually the computation gives (thanks to Lemma~\ref{lem-rewrite-ugw})
\begin{align*}
	\Ll(\pi) &= \int_{X^2 \times Y^2} L_{\C(\Gamma)}(\f\otimes\f,\g\otimes\g)\d\pi\d\pi+\phi(0)(|(\mu\otimes\mu)^\bot| +|(\nu\otimes\nu)^\bot|)\\
	&\geq \int H_{\C(\Gamma)}(\f\otimes\f , \g\otimes\g)\d\pi\d\pi  + \phi(0)(|(\mu\otimes\mu)^\bot| +|(\nu\otimes\nu)^\bot|)\\
	&\geq \int \Dd([d_X(x, x'), (\f\otimes\f)^{\frac{1}{p}}], [d_Y(y,y'), (\g\otimes\g)^{\frac{1}{p}}])^q\d\pi\d\pi\nonumber\\
	&\qquad+ \int \phi(0)(rr')^p \d\rest{(\al\otimes\al)}{S^c} + \int \phi(0)(ss')^p \d\rest{(\al\otimes\al)}{S^c}\\
	&\geq \int \Dd([d_X(x, x'), rr'], [d_Y(y,y'), ss'])^q\d\rest{(\al\otimes\al)}{S}\nonumber\\
	&\qquad+ \int \phi(0)((rr')^p  + (ss')^p)\d\rest{(\al\otimes\al)}{S^c}\\
	&\geq \int \Dd([d_X(x, x'), rr'], [d_Y(y,y'), ss'])^q\d\al\d\al\\
	&\geq \Hh(\al).
\end{align*}
Thus we have $\UGW(\Xx,\Yy)=\Ll(\pi)\geq\Hh(\al)\geq\CGW(\Xx,\Yy)$.
\newpage

\section{Optimization, algorithms and formulas}
\label{appendix-algo}

We present in this section the important results of Section~\ref{sec-algo}. We start with Theorem~\ref{ThKonno'sGeneralization} stating that for a wide range of quadratic programs, performing a bi-convex relaxation yields the same objective value as the original program. We prove its application in Theorem~\ref{ThTightLowerBound}. We provide a decomposition property of $\KL^\otimes$, followed by the proof of Proposition~\ref{prop-alternate-simple}, and a description of the algorithm in a discrete setting, where computationnaly implementable formulas are provided.

\subsection{Proof of Theorem~\ref{ThKonno'sGeneralization}}
\begin{proof}
The function $\Ff$ is the symmetrization of $\Ll$, so that $\Ff(\pi,\pi) = \Ll(\pi)$. By the hypothesis on $\Ll$, the mimimum values of the functions (if it exists) are finite.
The two following inequalities are obtained by optimality of $(\pi_*,\ga_*)$,
\begin{equation}
\begin{cases}\label{EqTwoIneq}
\Ff(\pi_*,\ga_*) \leq \Ff(\pi_*,\pi_*) \\
\Ff(\pi_*,\ga_*) \leq \Ff(\ga_*,\ga_*)\,. 
\end{cases}
\end{equation}
Note that the hypotheses imply that $\Ff(\pi_*,\pi_*)$ and $\Ff(\ga_*,\ga_*)$ are both finite.
Combining these two inequalities leads to 
$
\Ff(\pi_*,\pi_*) + \Ff(\ga_*,\ga_*) - 2\Ff(\pi_*,\ga_*) \geq 0\,,
$
which implies
\begin{equation}
\frac 12 \langle \pi_* - \ga_*, k (\pi_*-\ga_*) \rangle \geq 0\,,
\end{equation}
since the separable parts in $\Ff$ cancel. Since $k$ is negative, we also have the converse inequality, thus $\frac 12 \langle \pi_* - \ga_*, k (\pi_*-\ga_*) \rangle= 0$.
Therefore, we deduce when $k$ is definite that $\pi_* = \ga_*$.
\par
We now treat the case when $k$ is not definite. In this case, we only have $\frac 12 \langle \pi_* - \ga_*, k (\pi_*-\ga_*) \rangle= 0$ which implies that $\pi_* - \ga_* \in \operatorname{Ker}(k)$ since $k$ is non positive. The first inequality in \eqref{EqTwoIneq} implies $f(\pi_*) \leq f(\ga_*)$ and by symmetry $f(\pi_*) = f(\ga_*)$ and as a conclusion $\Ff(\pi_*,\pi_*) = \Ff(\pi_*,\ga_*) = \Ff(\ga_*,\ga_*)$.
\par
The last case follows from the observation that
on the segment $[\pi_*,\ga_*] \subset C$, the quadratic part of $\Ff$ is constant. Indeed one has for $t\in [0,1]$, for $z=t(\pi_* - \ga_*) + \ga_*$ one has
\begin{align*}
	\dotp{z}{k(z)} = t^2\dotp{(\pi_* - \ga_*)}{k(\pi_* - \ga_*)} + 2t\dotp{\ga_*}{k(\pi_* - \ga_*)} + \dotp{\ga_*}{k(\ga_*)}
	=\dotp{\ga_*}{k(\ga_*)},
\end{align*}
since $\pi_* - \ga_* \in \operatorname{Ker}(k)$.
 Thus minimizing $\Ff$ on $[\pi_*,\ga_*]$ is reduced to the minimization of $f$ on this segment. By the above remark, $f(\pi_*) = f(\ga_*)$ which implies $\pi_* = \ga_*$ by strict convexity.
\end{proof}

\subsection{Properties of the quadratic KL divergence}

We present in this section an additional property on the quadratic-KL divergence which allows to reduce the computational burden to evaluate it by involving the computation of a standard $\KL$ divergence. 

\begin{proposition}\label{prop-decompose-kl}
	For any measures $(\mu,\nu)\in\Mm_+(\Xx)$, one has
	\begin{equation}
	\begin{aligned}
	\KL(\mu\otimes\nu|\al\otimes\be) &= m(\nu)\KL(\mu|\al) +  m(\mu)\KL(\nu|\be)\\
	&\qquad+ (m(\mu) - m(\al))(m(\nu) - m(\be)).
	\end{aligned}
	\end{equation}
	In particular,
	\begin{align}
		\KL(\mu\otimes\mu|\nu\otimes\nu) = 2m(\mu)\KL(\mu|\nu) + (m(\mu) - m(\nu))^2.
	\end{align}
\end{proposition}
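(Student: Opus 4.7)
The plan is to unfold the definition of $\KL$ for arbitrary positive measures, use the product structure of the Radon–Nikodym derivative, and then rearrange. Recall that for the KL-entropy $\phi(r) = r\log r - r + 1$ one has $\phi^\prime_\infty = +\infty$, so (when $\mu\ll\al$) the definition~\eqref{eq-defn-csiszar} specializes to
\begin{equation*}
  \KL(\mu|\al) \;=\; \int \log\!\Big(\tfrac{\d\mu}{\d\al}\Big)\,\d\mu \;-\; m(\mu) + m(\al),
\end{equation*}
and the value is $+\infty$ as soon as $\mu\not\ll\al$. I would first dispose of the singular case: if $\mu\not\ll\al$ or $\nu\not\ll\be$, then $\mu\otimes\nu\not\ll\al\otimes\be$, so both sides of the identity are $+\infty$ (the right-hand side because at least one of the two $\KL$ terms blows up, assuming $m(\mu),m(\nu)>0$; the degenerate $m=0$ subcase is trivial).

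For the absolutely continuous case, I would note that the product measure has density $\frac{\d(\mu\otimes\nu)}{\d(\al\otimes\be)}(x,y) = \frac{\d\mu}{\d\al}(x)\cdot\frac{\d\nu}{\d\be}(y)$, and apply the explicit formula above to $\KL(\mu\otimes\nu|\al\otimes\be)$. Using $\log(ab)=\log a + \log b$ and Fubini, the double integral splits as $m(\nu)\int\log(\d\mu/\d\al)\,\d\mu + m(\mu)\int\log(\d\nu/\d\be)\,\d\nu$, while the mass correction contributes $-m(\mu)m(\nu) + m(\al)m(\be)$.

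Next I would re-express each single integral via $\int\log(\d\mu/\d\al)\,\d\mu = \KL(\mu|\al) + m(\mu) - m(\al)$ (and similarly for $\nu,\be$), substitute, and collect terms. The linear-in-mass terms $m(\nu)(m(\mu)-m(\al)) + m(\mu)(m(\nu)-m(\be)) - m(\mu)m(\nu) + m(\al)m(\be)$ combine into exactly $(m(\mu)-m(\al))(m(\nu)-m(\be))$ by direct expansion, which yields the claimed identity. The whole argument is purely algebraic once the definitions are written out; the only subtle point is the bookkeeping of the $-m(\mu)+m(\al)$ correction coming from the use of reference measures with possibly different masses, which is where a naive calculation that forgets these correction terms would fail.

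The special case $\KL(\mu\otimes\mu|\nu\otimes\nu) = 2m(\mu)\KL(\mu|\nu) + (m(\mu)-m(\nu))^2$ is then an immediate substitution, taking both factors in the left slot equal to $\mu$ and both in the right slot equal to $\nu$, so that the cross term becomes the square $(m(\mu)-m(\nu))^2$. No further work is required.
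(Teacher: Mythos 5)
Your proof is correct and takes essentially the same route as the paper's: write out the explicit integral formula for $\KL$ with unnormalized measures, use $\frac{\d(\mu\otimes\nu)}{\d(\al\otimes\be)} = \frac{\d\mu}{\d\al}\otimes\frac{\d\nu}{\d\be}$ and $\log(ab)=\log a+\log b$ with Fubini, substitute back $\int\log(\d\mu/\d\al)\,\d\mu = \KL(\mu|\al)+m(\mu)-m(\al)$, and collect the mass terms into the product $(m(\mu)-m(\al))(m(\nu)-m(\be))$. The only difference is that you spell out the singular/degenerate cases explicitly, whereas the paper simply opens with ``assuming $\KL(\mu\otimes\nu|\al\otimes\be)$ to be finite''; this is a welcome bit of extra care but not a different argument.
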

\begin{proof}
	Assuming $\KL(\mu\otimes\nu|\al\otimes\be)$ to be finite, one has $\mu = \f \al$ and $\nu = \g\be$. It reads
	\begin{align*}
		\KL(\mu\otimes\nu|\al\otimes\be) &= \int \log(\f\otimes\g) \d\mu\d\nu - m(\mu)m(\nu) + m(\al)m(\be)\\
		&= m(\nu)\int\log(\f)\d\mu  + m(\mu)\int\log(\g)\d\nu\nonumber\\
		 &\qquad- m(\mu)m(\nu) + m(\al)m(\be)\\
		&= m(\nu)\big[ \KL(\mu|\al) + m(\mu) - m(\al) \big]\nonumber\\ 
		&\qquad+ m(\mu)\big[ \KL(\nu|\be)+ m(\nu) - m(\be) \big]\nonumber\\ 
		&\qquad- m(\mu)m(\nu) + m(\al)m(\be)\\
		&=m(\nu)\KL(\mu|\al) + m(\mu)\KL(\nu|\be)\\ &\qquad+ m(\mu)m(\nu) - m(\nu)m(\al) - m(\mu)m(\be) + m(\al)m(\be)\nonumber\\
		&=m(\nu)\KL(\mu|\al) +  m(\mu)\KL(\nu|\be)\nonumber\\ 
		&\qquad+ (m(\mu) - m(\al))(m(\nu) - m(\be)).
	\end{align*}
\end{proof}

In the Balanced setting, with $(\mu,\nu)$ probabilities, the regularization reads $\KL^\otimes(\pi|\mu\otimes\nu) = 2\KL(\pi|\mu\otimes\nu)$. Thus (up to a factor 2) we retrieve as a particular case the setting of~\cite{peyre2016gromov}.

\subsection{Proof of Proposition~\ref{prop-alternate-simple}}

We now prove Proposition~\ref{prop-alternate-simple} which applies the above result.
\begin{proposition}\label{prop-app-iterate-cost}
	For a fixed $\ga$, the optimal
	$\pi\in\arg\umin{\pi} \Ff(\pi,\ga) +\epsilon\KL(\pi\otimes\gamma|(\mu\otimes\nu)^{\otimes 2})$
	is the solution of
	$\umin{\pi} \int c^\epsilon_\ga(x,y) \d\pi(x,y) + \rho m(\ga) \KL(\pi_1|\mu)
	+ \rho m(\ga) \KL(\pi_2|\nu) + \epsilon m(\ga) \KL(\pi|\mu\otimes\nu)$,
	where $m(\ga) \eqdef \ga(X \times Y)$ is the total mass of $\ga$, and
	where we define the cost and weight associated to $\ga$ as
	\begin{align*}
		c^\epsilon_\ga(x,y) \eqdef \int \C(\Gamma(x,\cdot,y,\cdot))\d\ga &+ \rho\int \log(\frac{\d\ga_1}{\d\mu})\d\ga_1 + \rho\int \log(\frac{\d\ga_2}{\d\nu})\d\ga_2\nonumber
		+ \epsilon\int \log(\frac{\d\ga}{\d\mu\d\nu})\d\ga.\nonumber
	\end{align*}
\end{proposition}
\begin{proof}
	First note that $\Ff(\ga,\pi)=\Ff(\pi,\ga)$ so that minimizing with the first or the second argument gives the same solution.
	Setting $\ga$ to be fixed, the rest follows from the factorisation
\begin{align*}
	\KL(\pi_1\otimes\ga_1|\mu\otimes\mu)  &= m(\ga)\KL(\pi_1|\mu) + m(\pi)\KL(\ga_1|\mu) + (m(\ga) - m(\mu))(m(\pi) - m(\mu))\\
	& = m(\pi)\Big[ \KL(\ga_1|\mu) + m(\ga) - m(\mu) \Big] + m(\ga)\KL(\pi_1|\mu) - m(\ga)m(\mu)\\
	& = m(\pi) \int \log(\frac{\d\ga_1}{\d\mu})\d\ga_1 + m(\ga)\KL(\pi_1|\mu) - m(\ga)m(\mu)\\
	& = \int \Bigg(\int \log(\frac{\d\ga_1}{\d\mu})\d\ga_1 \Bigg)\d\pi  + m(\ga)\KL(\pi_1|\mu) - m(\ga)m(\mu),
\end{align*}
	and also from
	$\KL(\pi_1|\mu)  = \int\log(\frac{\d\ga_1}{\d\mu})\d\ga_1 - (m(\ga) - m(\mu))$.
	Similar formulas hold for $(\pi_2,\gamma_2)$ and $(\pi,\gamma)$. Summing all $\KL$ terms yields the expression for $c^\epsilon_\ga$.
\end{proof}

\subsection{Discrete setting and formulas}

In order to implement those algorithms, one consider discrete mm-spaces $X=(x_i)_{i=1}^n$ and $Y=(y_j)_{j=1}^m$, endowed with discrete measures
$\mu=\sum_i \mu_i \de_{x_i}$ and $\nu=\sum_j \nu_j \de_{y_j}$, where $\mu_i,\nu_j \geq 0$. 
The distance matrices are $D^X_{i,i'} \eqdef d_X(x_i,x_{i'})$ and $D^X_{j,j'} \eqdef d_X(y_j,y_{j'})$.
Transport plans are thus also discrete $\pi=\sum_{i,j} \pi_{i,j}\de_{(x_i,y_j)}$.





The functional $\Ll$ now reads in this discrete setting
\begin{align*}
	\int(d_X(x,x') - d_Y(y,y'))^2\d\pi(x,y)\d\pi(x',y') =  \sum_{i,j,k,\ell} (D_{i,j}^X - D_{k,\ell}^Y)^2\pi_{i,k}\pi_{j,\ell},
\end{align*}
\begin{align*}
	\qandq \KL(\pi_1\otimes\pi_1 | \mu\otimes\mu) &=  \sum_{i,j}\log\Big(\frac{\pi_{1,i}\pi_{1,j}}{\mu_i\mu_j}\Big) \pi_{1,i}\pi_{1,j} - \sum_{i,j} \pi_{1,i}\pi_{1,j} + \sum_{i,j} \mu_i\mu_j\\
	&=  2m(\pi)\sum_{i}\log\Big(\frac{\pi_{1,i}}{\mu_i}\Big) \pi_{1,i} -m(\pi)^2 + m(\mu)^2,
\end{align*}
where we define the marginals $\pi_{1,k} \eqdef \sum_j \pi_{k,j}$, $\pi_{2,\ell} \eqdef \sum_i \pi_{i,\ell}$ and $m(\pi)=\sum_{i,j} \pi_{i,j}$.

When one runs the stabilized implementation of Sinkhorn's iterations with a ground cost $C_{i,j}=C(x_i,y_j)$ between the points, it is necessary to use a Log-Sum-Exp reduction which reads 
\begin{align}\label{eq-stable-lse}
	\f_i \leftarrow -\frac{\epsilon\rho}{\epsilon + \rho} \text{LSE}_j\big[(g_j - C_{i,j}) / \epsilon + \log(\mu_{j})\big]
\end{align}
where $\text{LSE}_j$ is a reduction performed on the index $j$. It reads
\begin{align}
	\text{LSE}_j(C_{i,j}) \eqdef \log \Big(\sum_j \exp(C_{i,j} - \max_k C_{i,k})\Big) + \max_k C_{i,k},
\end{align}
where the logarithm and exponential are pointwise operations.

\begin{algorithm}[tb]
	\caption{-- \textbf{UGW($\Xx$, $\Yy$, $\rho$, $\epsilon$)} in discrete form}
	\textbf{Input:} mm-spaces $\Xx = (D^X_{i,j}, (\mu_i)_i)$ and $\Yy=(D^Y_{i,j}, (\nu_j)_j)$,  relaxation $\rho$, regularization $\epsilon$ \\
	\textbf{Output:} approximation $(\pi,\ga)$ minimizing~\ref{eq-lower-bound}
	\begin{algorithmic}[1]
		\STATE Initialize matrix $\pi_{i,j}=\ga_{i,j}=\mu_i\nu_j / \sqrt{(\sum_i \mu_i) (\sum_j \nu_j)}$, vector $g^{(s=0)}_j=0$.
		\WHILE{$\pi$ has not converged}
		\STATE Update $\pi\leftarrow\ga$
		\STATE Define $m(\pi)\leftarrow \sum_{i,j} \pi_{i,j}$, $\tilde{\rho} \leftarrow m(\pi)\rho$, $\tilde{\epsilon} \leftarrow m(\pi)\epsilon$
		\STATE Define $c \leftarrow$ ComputeCost($\Xx$, $\Yy$, $\pi$, $\rho$, $\epsilon$)
		\vspace*{0.05cm}
		\WHILE{$(\f,\g)$ has not converged}
		\STATE  $\f\leftarrow -\frac{\tilde{\epsilon}\tilde{\rho}}{\tilde{\epsilon} + \tilde{\rho}}
		\log\Big[\sum_j \exp\big((\g_j - c_{i,j}) / \tilde{\epsilon} + \log\nu_j \big)\Big]$
		\STATE  $\g\leftarrow -\frac{\tilde{\epsilon}\tilde{\rho}}{\tilde{\epsilon} + \tilde{\rho}}
		\log\Big[\sum_i \exp\big((\f_i - c_{i,j}) / \tilde{\epsilon} + \log\mu_i \big)\Big]$
		\ENDWHILE
		\STATE Update $\ga_{i,j}
		\leftarrow \exp\Big[ (\f_i+\g_j-c_{i,j}) / \tilde{\epsilon} \Big]\mu_i\nu_j$
		\STATE Rescale $\ga\leftarrow \sqrt{m(\pi) / m(\ga)} \ga$
		\ENDWHILE
		\STATE Return $(\pi,\ga)$.
	\end{algorithmic}
\end{algorithm}

We also provide an algorithm that computes the cost $c^\epsilon_{\pi}$ defined in Proposition~\eqref{prop-app-iterate-cost}. We focus on the case $\D_\phi=\rho\KL$ and $\C(t)=t^2$ which is computable with complexity $O(n^3)$ as shown in~\citet{peyre2016gromov}. Indeed, note that one has
\begin{align*}
	\int (d_X(x,x') - d_Y(y,y'))^2\d\pi(x',y') &= \int d_X(x,x')^2\d\pi_1(x') + \int d_Y(y,y')^2\d\pi_2(y')\\ 
	&- 2 \int d_X(x,x')d_Y(y,y')\d\pi(x',y').
\end{align*}

\begin{algorithm}[H]
	\caption{-- \textbf{ComputeCost($\Xx$, $\Yy$, $\pi$, $\rho$, $\epsilon$)} in discrete form}
	\textbf{Input:} mm-spaces $\Xx = (D^X_{i,j}, (\mu_i)_i)$ and $\Yy=(D^Y_{k,\ell}, (\nu_j)_j)$, transport matrix $(\pi_{j,k})_{j,k}$, relaxation $\rho$, regularization $\epsilon$ \\
	\textbf{Output:} cost $c^\epsilon_{\pi}$ defined in Proposition~\ref{prop-app-iterate-cost}
	\begin{algorithmic}[1]
		\STATE Compute $\pi_{1,j} \leftarrow \sum_k \pi_{j,k}$ and $\pi_{2,k} \leftarrow \sum_j \pi_{j,k}$ \COMMENT{$\pi_1=\pi\bm{1}$ and $\pi_2=\pi^\top\bm{1}$}
		\STATE Compute $A_i \leftarrow \sum_{j} (D^X_{i,j})^2 \pi_{1,j}$ \COMMENT{$A = (D^X)^{\circ 2}\pi_1$}
		\STATE Compute $B_\ell \leftarrow \sum_{k} (D^Y_{k,\ell})^2 \pi_{2,k}$  \COMMENT{$B = (D^Y)^{\circ 2}\pi_2$}
		\STATE Compute $C_{i,\ell} \leftarrow \sum_j D^X_{i,j} \big( \sum_k D^Y_{k,\ell}\pi_{j,k} \big)$ \COMMENT{$C = D^X \pi D^Y$}
		\STATE Compute $E \leftarrow \rho \sum_j \log\big(\frac{\pi_{1,j}}{\mu_j}\big)\pi_{1,j}   +   \rho \sum_k \log\big(\frac{\pi_{2,k}}{\nu_k}\big)\pi_{2,k}   +   \epsilon\sum_{j,k} \log\big(\frac{\pi_{jk}}{\mu_j\nu_k}\big)\pi_{j,k}$
		\STATE Return $c^\epsilon_{\pi, i,\ell} \leftarrow A_i + B_\ell - 2 C_{i,\ell} + E$ 
	\end{algorithmic}
\end{algorithm}

\newpage
\section{Supplementary experiments}
\label{sec-app-xp}

We provide in this section details on Section~\ref{sec-xp}. 
We start with supplementary synthetic experiments illustrating various features of $\UGW$.
We present our approach to approximate the distance $\CGW$ using a bi-convex relaxation and alternate minimization. We prove the tightness of this relaxation and provide details on the experiments of Section~\ref{sec-algo}.
Then we provide details on the PU learning experiments.

\subsection{Synthetic experiments}

\paragraph{Robustness to outlier}

Figure~\ref{fig-match} shows another experiment on a 2-D dataset, using the same display convention as in Figure~\ref{fig-weight}. It corresponds to the two moons dataset with additional outliers (displayed in  cyan).
Decreasing the value of $\rho$ (thus allowing for more mass creation/destruction in place of transportation) is able to reduce and even remove the influence of the outliers, as expected. 
Furthermore, using small values of $\rho$ tends to favor ``local structures'', which is a behavior quite different from UW~\eqref{eq-uw}.
Indeed, for UW, $\rho \rightarrow 0$ sets to zero all the mass of $\pi$ outside of the diagonal (points are not transported), while for $\UGW$, it is rather pairs of points with dissimilar pairwise distances which cannot be transported together.

\begin{figure}[h!]
	\centering
	\begin{tabular}{c@{}c@{}c@{}c}
		{\includegraphics[width=.24\linewidth]{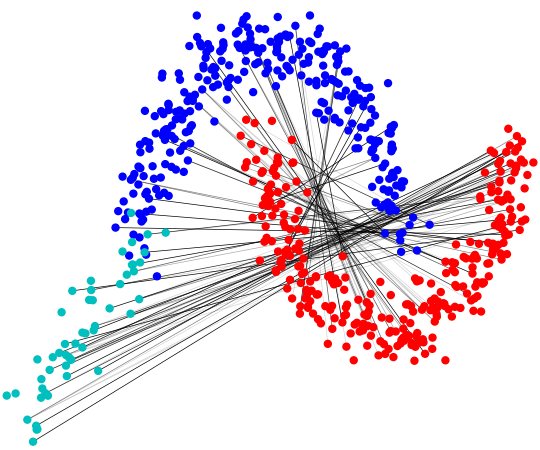}} & 
		{\includegraphics[width=.24\linewidth]{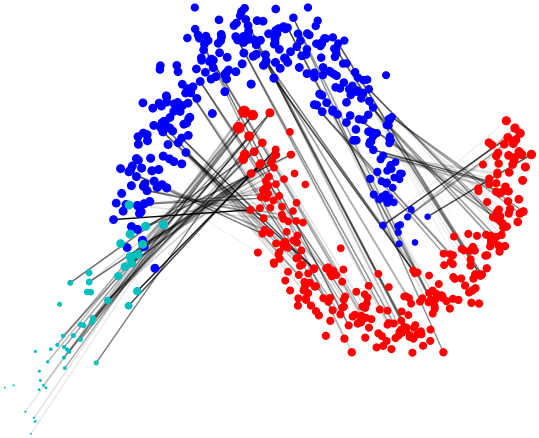}} &
		{\includegraphics[width=.24\linewidth]{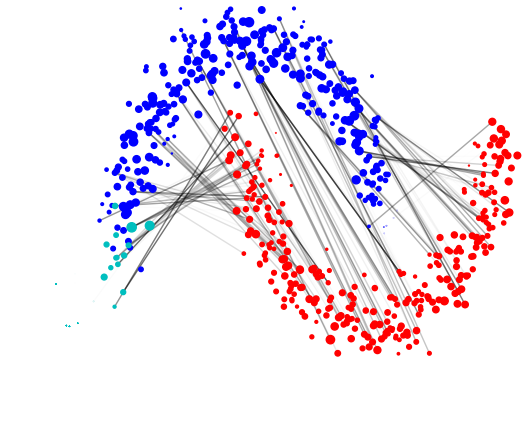}} & 
		{\includegraphics[width=.24\linewidth]{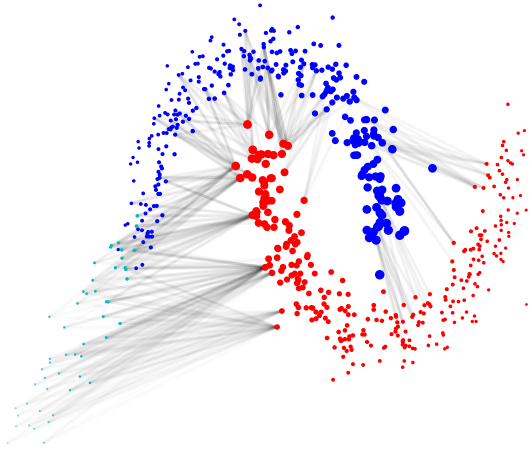}}\\[0mm]
		$\begin{matrix} \GW \\ \rho=\infty \end{matrix}$
		& $\begin{matrix} \UGW \\ \rho=10^0 \end{matrix}$ 
		& $\begin{matrix} \UGW \\ \rho=10^{-1} \end{matrix}$  
		& $\begin{matrix} \text{UW} \\ \rho=10^{-2} \end{matrix}$ 
	\end{tabular}
	\caption{GW and UGW applied to two moons with outliers. A matching using UW is provided to display how invariance to isometries is encoded in the matching.}
	\label{fig-match}
\end{figure}

\paragraph{Graph matching and comparison with Partial-GW.}

We now consider two graphs $(X,Y)$ equipped with their respective geodesic distances. These graphs correspond to points embedded in $\RR^2$, and the length of the edges corresponds to their Euclidean length. These two synthetic graphs are close to be isometric, but differ by addition or modification of small sub-structures. 
The colors $c(x)$ are defined on the ``source'' graph $X$ and are mapped by an optimal plan $\pi$ on $y \in Y$ to a color $\frac{1}{\pi_1(y)} \int_X c(x) \d \pi(x,y)$. This allows to visualize the matching induced by $\GW$ and UGW for a varying $\rho$, as displayed in Figure~\ref{fig-graph}. The graphs for GW should be taken as reference since there is no mass creation. The POT library~\citep{flamary2017pot} is used to compute GW.

For large values of $\rho$, UGW behaves similarly to GW, thus producing irregular matchings which do not preserve the overall geometry of the shapes. 
In sharp contrast, for smaller values of $\rho$ (e.g. $\rho=10^{-1}$), some fine scale structures (such as the target's small circle) are discarded, and UGW is able to produce a meaningful partial matching of the graphs. 
For intermediate values ($\rho=10^0$), we observe that the two branches and the blue cluster of the source are correctly matched to the target, while for GW the blue points are scattered because of the marginal constraint.

\newcommand{\myfig}[1]{\includegraphics[width=.22\linewidth]{sections/figures/plots_graph/pic_graph_#1}}
\begin{figure}[!h]
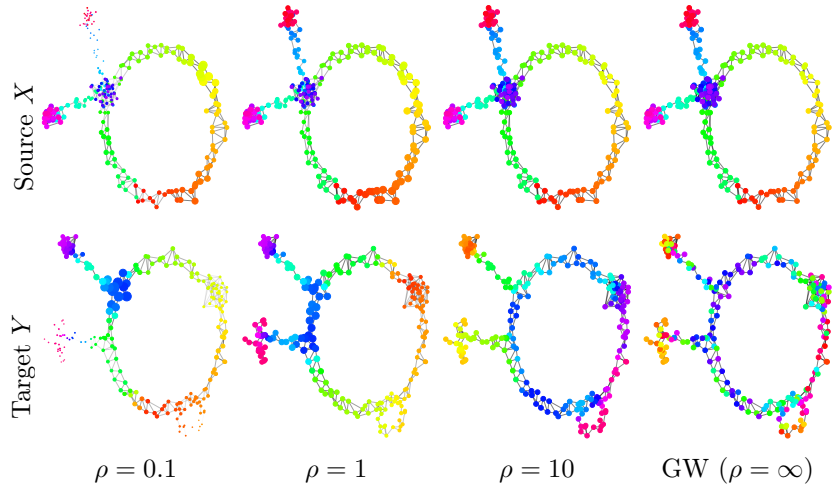

	\centering
	\begin{tabular}{@{}c@{}c@{}c@{}c@{}c@{}}
		\rotatebox{90}{\quad Source X} & \myfig{source_UGW_rho0_1_eps0_01.png} & \myfig{source_UGW_rho1_0_eps0_01.png} & \myfig{source_UGW_rho10_0_eps0_01.png} & \myfig{source_GW.png} \\
		\rotatebox{90}{\quad Target Y} & \myfig{target_UGW_rho0_1_eps0_01.png} & \myfig{target_UGW_rho1_0_eps0_01.png} & \myfig{target_UGW_rho10_0_eps0_01.png} & \myfig{target_GW.png} \\
		& $\rho=0.1$ & $\rho=1$ & $\rho=10$ & GW ($\rho=\infty$)
	\end{tabular}
	\caption{Comparison of UGW and GW for graph matching.}
	\label{fig-graph}
\end{figure}

\newcommand{\myfigd}[1]{\includegraphics[width=.2\linewidth]{sections/figures/plots_graph/pic_graph_#1}}
\begin{figure}
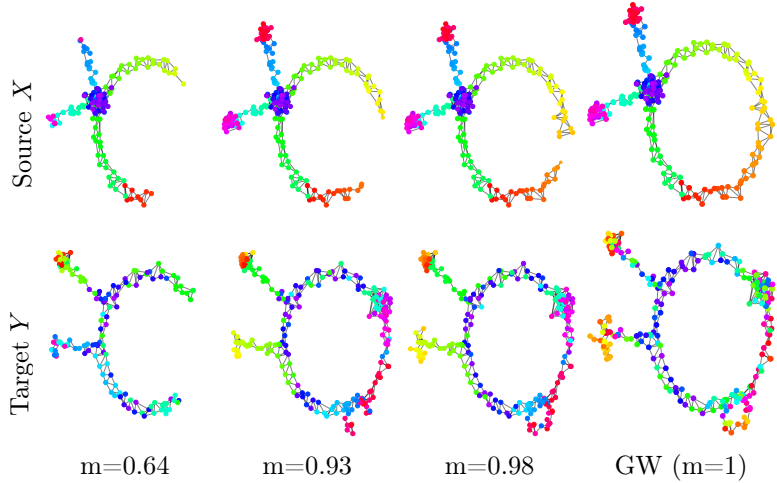

	\centering
	\begin{tabular}{@{}c@{}c@{}c@{}c@{}c@{}}
		\rotatebox{90}{\quad Source X} & \myfigd{source_PGW_mass0_645.png} & \myfigd{source_PGW_mass0_925.png} & \myfigd{source_PGW_mass0_978.png} & \myfig{source_GW.png} \\
		\rotatebox{90}{\quad Target Y} & \myfigd{target_PGW_mass0_645.png} & \myfigd{target_PGW_mass0_925.png} & \myfigd{target_PGW_mass0_978.png} & \myfig{target_GW.png} \\
		& m=$0.64$ & m=$0.93$ & m=$0.98$ & GW (m=$1$)
	\end{tabular}
	\caption{Comparison of Partial-GW for graph matching. Here $m$ is the budget of transported mass.}
	\label{fig-graph-2}
\end{figure}

Figure~\ref{fig-graph-2} shows a comparison with Partial-GW~\citep{chapel2020partial}, computed using the POT library. It is close to UGW with a $\TV^\otimes$ penalty, since partial OT is equivalent to the use of a TV relaxation of the marginal. 
UGW with a $\KL^\otimes$ penalty is first computed for a given $\rho$, then the total mass $m$ of the optimal plan is computed, and is used as a parameter for PGW which imposes this total mass as a constraint. Figure~\ref{fig-graph} and~\ref{fig-graph-2} display the transportation strategy associated to both methods. 
KL-UGW operates smooth transitions between transportation and creation of mass, while PGW either performs pure transportation or pure destruction/creation of mass. In Figure~\ref{fig-graph-2} nodes of the graphs are removed and thus ignored by the matching. Note also that since PGW is equivalent to solving GW on sub-graphs, the color distribution of GW and PGW are similar.

\paragraph{Influence of $\epsilon$.}

Figures~\ref{fig-weight}, \ref{fig-match}, \ref{fig-graph} and \ref{fig-graph-2} do not show the influence of $\epsilon$. 
This parameter is set of a low value $\epsilon=10^{-2}$ on a domain $[0,1]^2$ so as to approximate the optimal plan of the unregularized $\UGW$ problem. 
We present now an experiment on graphs which highlights the impact of $(\epsilon,\rho)$ on the plan $\pi$.

We compare two graphs $(\Xx,\Yy)$ displayed Figure~\ref{fig:foobar}. The graph $\Xx$ is composed of two communities of equal size connected with random edges. The graph $\Yy$ is similar to $\Xx$, but the communities are imbalanced and it contains outliers. Moving inside a community costs $1$, reaching another community costs $4$ and reaching an outlier $2$. We equip the mm-space with uniform weights and shortest path distance.

\begin{figure}
	\centering
	\begin{tabular}{c@{\hspace{3mm}}c}
		\includegraphics[width=0.3\textwidth]{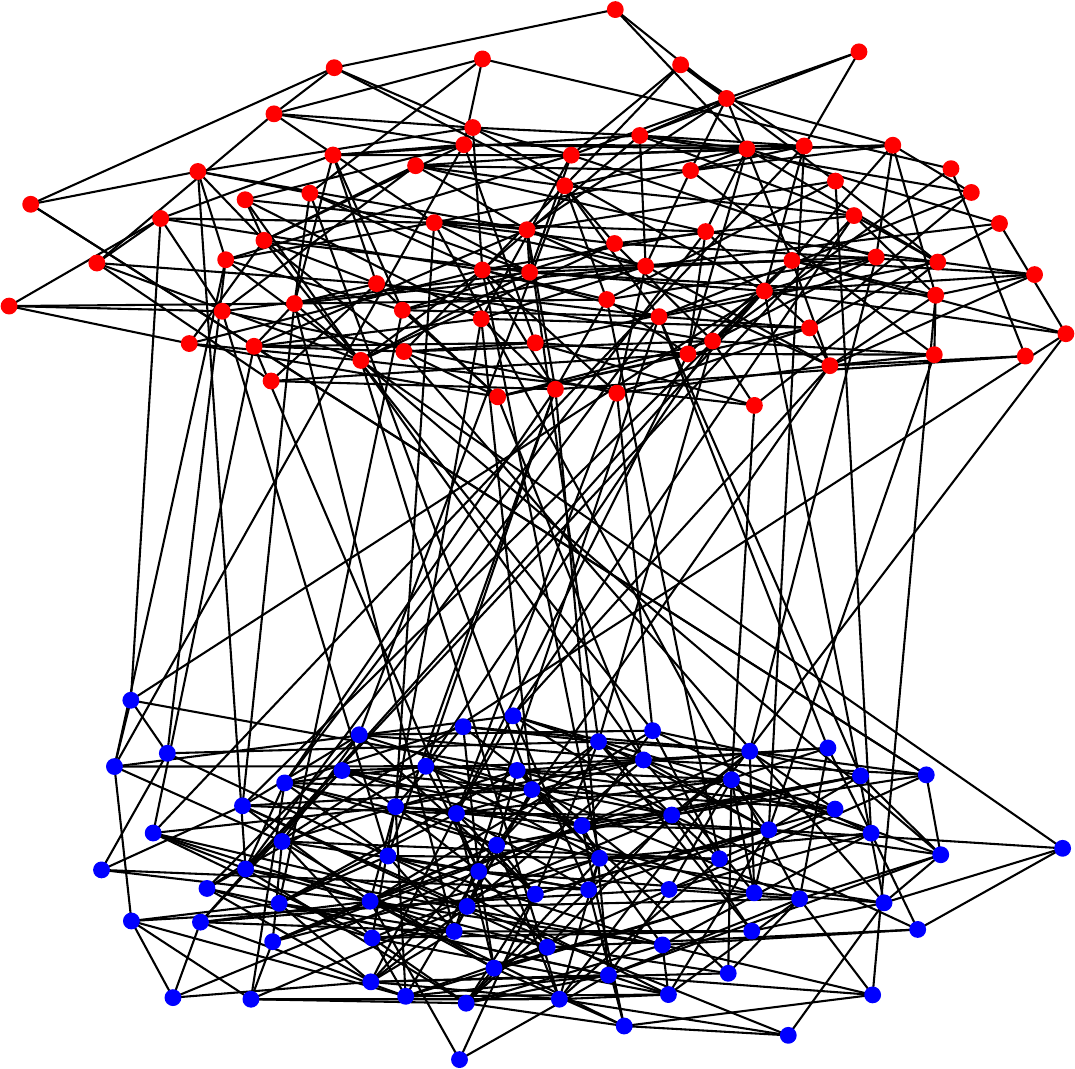}& 
		\includegraphics[width=0.3\textwidth]{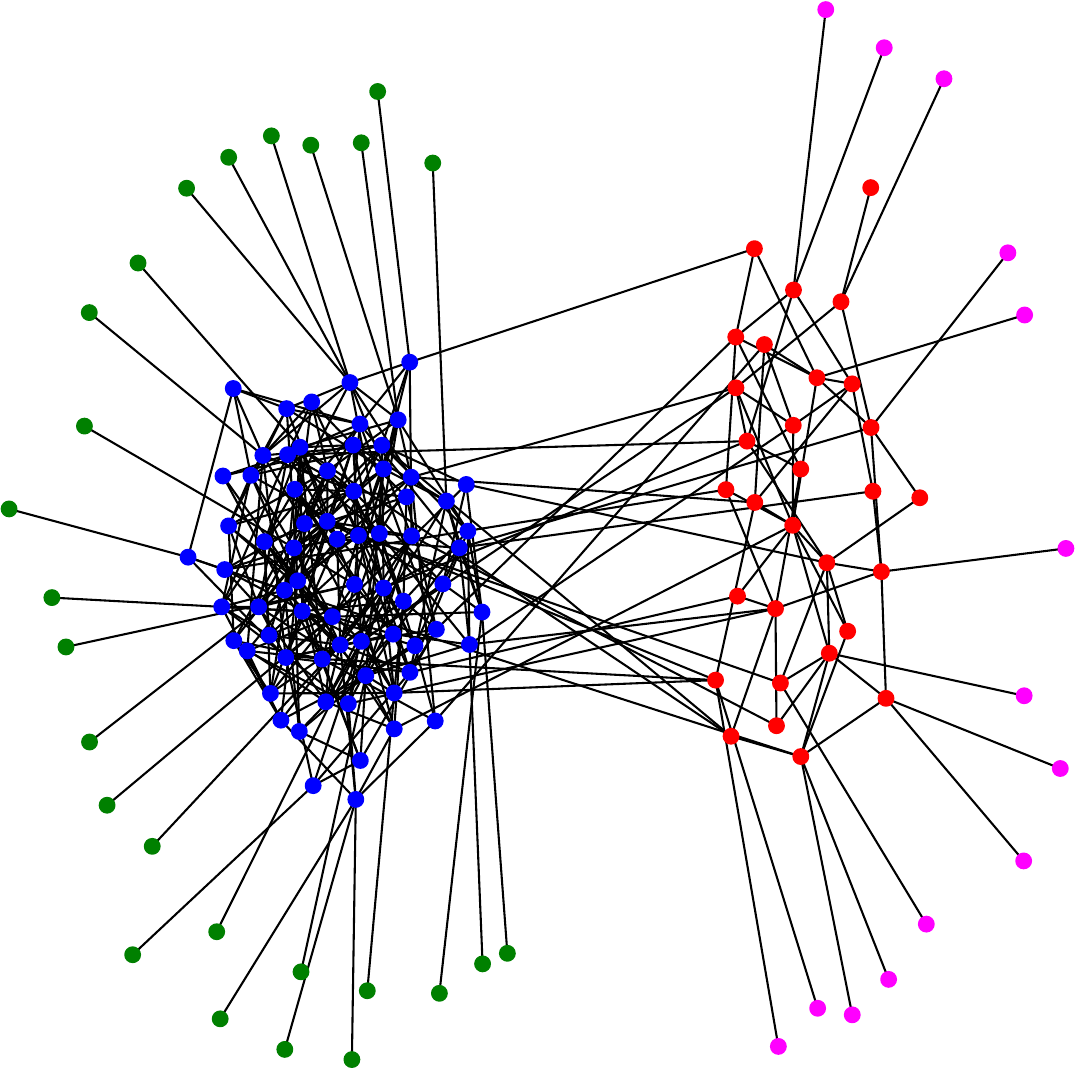}
	\end{tabular}
	\caption{Graphs $\Xx$ (left) and $\Yy$ (right) plotted using networkx.}
	\label{fig:foobar}
\end{figure}

We plot in Figure~\ref{fig-graph-match} optimal transport plans $\pi$ for given values of $(\epsilon,\rho)$, including the balanced case $\GW_\epsilon$ where $\rho=\infty$. 
The transport matrix has a block structure: the 2 horizontal blocks correspond to $\Xx$ and its two communities, the 4 vertical blocks corresponds to $\Yy$ (with, from left to right, the large blue community, the small red one, then the pink and green outliers).
Decreasing $\rho$ results in a more structured transport matrix: outliers are removed and inter-community matching is avoided. Again, the marginal constraint of $\GW_\epsilon$ makes the plan more sensitive to structural noise (e.g. outliers) in graphs.
Concerning the parameter $\epsilon$, increasing it creates correlations between pairs of points whose distortion is of order of $\sqrt{\epsilon}$. Indeed, we see for $\epsilon=3$ that correlation between communities and their outliers appear, even for small $\rho$.
Furthermore, when $\epsilon$ is too large the transport becomes uninformative, which highlights a crucial trade-off between computational speed and expressiveness of the transport plan.

\newcommand{\myfigu}[2]{\includegraphics[width=.2\linewidth]{sections/figures/figures_graph_matching_cor/plot_matrix_eps#1_rho#2.pdf}}
\begin{figure}
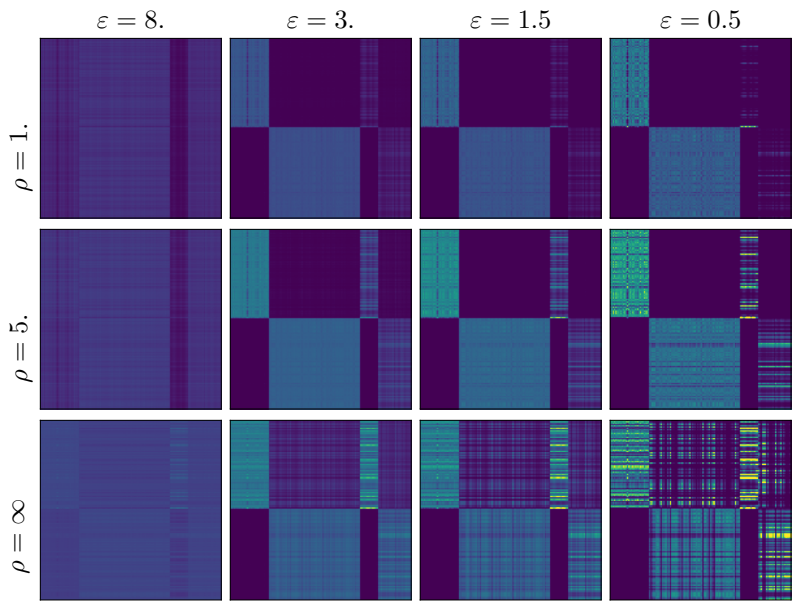

	\centering
	\begin{tabular}{c@{\hspace{1mm}}c@{\hspace{1mm}}c@{\hspace{1mm}}c@{\hspace{1mm}}c}
		& $\epsilon=8.$ & $\epsilon=3.$ & $\epsilon=1.5$ & $\epsilon=0.5$\\[0mm]
		\rotatebox{90}{\quad$\rho=1.$} & {\myfigu{80}{10}} & {\myfigu{30}{10}} & {\myfigu{15}{10}} & {\myfigu{05}{10}}\\[0mm]
		\rotatebox{90}{\quad$\rho=5.$} & {\myfigu{80}{50}} & {\myfigu{30}{50}} & {\myfigu{15}{50}} & {\myfigu{05}{50}}\\[0mm]
		\rotatebox{90}{\quad$\rho=\infty$} & {\myfigu{80}{None}} & {\myfigu{30}{None}} & {\myfigu{15}{None}} & {\myfigu{05}{None}}\\[0mm]
	\end{tabular}
	\caption{Display of the optimal transport plan $\pi$. The color scale is common to all plots.}
	\label{fig-graph-match}
\end{figure}

\subsection{Computation of the CGW distance}
In this section we focus on computing the distance $\CGW$~\eqref{eq-ugw-conic}, which is a quadratic minimization program with linear constraint.
Similar to what is performed with UGW~\ref{sec-algo}, we consider a relaxation using a tensorized conic plan $\al\otimes\be$ with $\al,\be\in\Uu_{p}(\mu,\nu)$.
The minimized cost thus reads
\begin{equation}\label{eq-cgw-relax}
\begin{aligned}
\Hh(\al,\be)\eqdef\int &\Dd([d_X(x,x'), r r'], [d_Y(y,y'), s s'])^q \,\d\al([x,r], [y,s])\d\be([x',r'], [y',s']).
\end{aligned}
\end{equation}
Note that for fixed $\be\in\Uu_{p}(\mu,\nu)$, the minimization w.r.t. $\al$ is a convex linear program with the linear conic constraint set $\Uu_{p}(\mu,\nu)$ and with cost
\begin{equation}\label{eq-conic-local-cost}
\begin{aligned}
\Cc_\Co(x,r,y,s)\eqdef\int &\Dd([d_X(x,x'), r r'], [d_Y(y,y'), s s'])^q \,\d\be([x',r'], [y',s']).
\end{aligned}
\end{equation}
Since we focus on the numerical implementation of CGW, we consider the setting of Gaussian-Hellinger distance which computes the distortion with $\C(t)=t^2$, due to a reduced memory and computation complexity to calculate $|d_X - d_Y|^2$ (see Section~\ref{sec-algo}).
In that case the cone distance reads for a given $\rho$
\begin{equation}\label{eq-gh-conic-cost}
\begin{aligned}
\Dd([d_X(x,x'), r r'], [d_Y(y,y'), s s'])^2 = \rho\Big[ (rr')^2 + (ss')^2 - 2rr'ss'\, e^{- |d_X - d_Y|^2 / 2\rho} \Big].
\end{aligned}
\end{equation}
Before focusing on the discretization of this problem to make it computable, we prove that when $|d_X - d_Y|^2$ is a conditionnaly definite kernel then the above cost is a negative kernel on $\Uu_{p}(\mu,\nu)$. 
Thus Theorem~\ref{ThKonno'sGeneralization} holds.
\begin{proposition}
	Assume that the kernel $|d_X - d_Y|^2$ is conditionnaly negative definite. Then the cost~\eqref{eq-gh-conic-cost} is a negative definite kernel on $\Uu_{p}(\mu,\nu)$.
\end{proposition}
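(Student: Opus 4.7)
\begin{skproof}
The plan is to verify the negative-definiteness condition directly on the difference $z = \al - \be$ for arbitrary $\al,\be\in\Uu_p(\mu,\nu)$, with $p=2$ as dictated by the Gaussian-Hellinger setting. First I would decompose the kernel~\eqref{eq-gh-conic-cost} into three pieces
\begin{align*}
K_1 = \rho(rr')^2,\quad K_2 = \rho(ss')^2,\quad K_3 = -2\rho\, rr'ss'\, e^{-|d_X(x,x')-d_Y(y,y')|^2/(2\rho)}.
\end{align*}
The pieces $K_1$ and $K_2$ are tensor products, so $\int K_1\,dz\otimes dz = \rho\bigl(\int r^2\,dz\bigr)^2$. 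The marginal constraint defining $\Uu_2(\mu,\nu)$ forces $\int r^2\,d\al = \int r^2\,d\be = m(\mu)$, hence $\int r^2\,dz=0$ and the $K_1$-contribution vanishes. The same argument applied to the $s$-coordinate handles $K_2$.

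The key step is the coupling term $K_3$. I would exploit the factorization $rr'ss'=(rs)(r's')$ to rewrite the quadratic form as an integral against the tensor square of the push-forward signed measure $\tilde z$ on $X\times Y$ defined by $d\tilde z(x,y)=\int_{\RR_+^2} rs\, dz([x,r],[y,s])$. Since the exponential depends only on $(x,x',y,y')$ one obtains
\begin{align*}
\int K_3\, dz\otimes dz = -2\rho \int e^{-|d_X(x,x')-d_Y(y,y')|^2/(2\rho)}\, d\tilde z(x,y)\, d\tilde z(x',y').
\end{align*}
By hypothesis $|d_X-d_Y|^2$ is conditionally negative definite, so Schoenberg's theorem implies that $(x,y,x',y')\mapsto e^{-|d_X(x,x')-d_Y(y,y')|^2/(2\rho)}$ is a positive definite kernel on $X\times Y$. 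The above integral is therefore non-negative, and with the $-2\rho$ prefactor we obtain $\int K_3\,dz\otimes dz\leq 0$.

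Summing the three contributions yields $\int \Cc_\Co\, dz\otimes dz\leq 0$ for every $z$ in the linear span of differences in $\Uu_2(\mu,\nu)$, which is exactly the negative-definiteness property required by Theorem~\ref{ThKonno'sGeneralization}. The main conceptual difficulty is the coupling term: one must notice that the bilinear radial factor $rr'ss'$ separates as $(rs)(r's')$ and that the cone radii can be absorbed into a signed measure on $X\times Y$, at which point the problem reduces to the classical Schoenberg implication between conditional negative-definiteness of a kernel and positive-definiteness of the associated Gaussian kernel.
\end{skproof}
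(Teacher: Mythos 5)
Your proof is correct and follows the same two-step strategy as the paper: dispose of the $\rho(rr')^2$ and $\rho(ss')^2$ pieces using the marginal constraints of $\Uu_2(\mu,\nu)$, then show the cross term is negative definite via Schoenberg. The only place you genuinely diverge is in the treatment of the coupling term $K_3$. The paper argues that $K_3$ is (minus) a product of three positive definite kernels on the cone --- the rank-one kernels $rr'$ and $ss'$ and the Gaussian-type kernel $e^{-|d_X-d_Y|^2/(2\rho)}$ --- and invokes the Schur-product theorem (via Berg) to conclude. You instead absorb the rank-one factors into the signed measure: since $rr'ss'=(rs)(r's')$, the quadratic form of $K_3$ against $z\otimes z$ equals the quadratic form of the Gaussian kernel alone against $\tilde z\otimes\tilde z$, where $\tilde z$ is the push-forward $(x,y)\mapsto\int rs\,dz$ on $X\times Y$. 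This buys a more elementary argument (no Schur product needed, only Schoenberg on the base space), and it makes explicit the precise non-negativity $\int K_3\,dz\otimes dz\leq 0$ on the span $\Delta C$ required by Theorem~\ref{ThKonno'sGeneralization}, where the paper instead notes that the separable pieces are constant on $\Uu_p(\mu,\nu)$ --- an equivalent but less direct reformulation. Both proofs are valid; yours is a small clarification of the same idea.
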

\begin{proof}
	Take any plan $\al\in\Uu_{p}(\mu,\nu)$. Integrating against $(rr')^2$ or $(ss')^2$ yields a constant term. Indeed one has for $(rr')^2$
	\begin{align*}
		\int (rr')^2\d\al([x,r], [y,s])\d\al([x',r'], [y',s']) &= \Bigg( \int (r)^2\d\al([x,r], [y,s]) \Bigg)^2\\
		 &= \Bigg( \int \d\mu(x) \Bigg)^2 \\
		 &= m(\mu)^2.
	\end{align*}
	Thus minimizing w.r.t.~\eqref{eq-gh-conic-cost} is equivalent to minimizing w.r.t. $- 2rr'ss'\, e^{- |d_X - d_Y|^2 / 2\rho}$, which is a product of positive definite kernels $(rr')$, $(ss')$ and $e^{- |d_X - d_Y|^2 / 2\rho}$ thanks to Berg's Theorem~\citet{berg1984harmonic} and because we assume the kernel $|d_X - d_Y|^2$ is c.n.d.
	Due to the extra minus sign we get that the kernel is negative definite, which ends the proof.
\end{proof}

An important point is to implement the constraint set $\Uu_{p}(\mu,\nu)$ which integrates against radial coordinates $(r,s)\in\RR_+^2$.
Such integration is impossible in practice, but thanks to~\citet[Theorem 7.20]{liero2015optimal}, we know that the radius can be restricted to $[0,R]$ where $R^2 = m(\mu)^2 + m(\nu)^2$ (up to a dilation of the plan).
Thus we propose to discretize the constraint by sampling regularly the interval as $\{lR / L, l\in\llbracket 0,L\rrbracket\}$.

We consider discrete mm-spaces as in Section~\ref{appendix-algo}, i.e. mm-spaces noted as $\Xx = (D^X_{i,j}, (\mu_i)_i)$ and $\Yy=(D^Y_{i,j}, (\nu_j)_j)$. 
Write a conic plan $\al_{ijkl}=\al([x_i, r_k],[y_j, s_l])$. The conic constraints read for $k\in\llbracket 0,K\rrbracket$ and $l\in\llbracket 0,L\rrbracket$
\begin{align*}
	\sum_{j,k,l}\Big(\frac{kR}{K}\Big)^2\al_{ijkl} = \mu_i \qandq \sum_{i,k,l}\Big(\frac{lR}{L}\Big)^2\al_{ijkl} = \nu_j.
\end{align*}
The cost $\Cc_\Co$~\eqref{eq-conic-local-cost} is computed via the formula
\begin{align*}
	\Cc_{ijkl} \eqdef \sum_{i',j',k',l'} \rho\Bigg[ (\tfrac{kR}{K}\tfrac{k'R}{K})^2 + (\tfrac{lR}{L}\tfrac{l'R}{L})^2 - 2(\tfrac{kR}{K}\tfrac{k'R}{K}\tfrac{lR}{L}\tfrac{l'R}{L})e^{-|D^X_{i,i'} - D^Y_{j,j'}|^2 / 2\rho} \Bigg]\al_{i'j'k'l'}.
\end{align*}
Eventually, the whole program solving one step of the alternate minimization algorithm is given Equation~\eqref{eq-cgw-discrete}.
The approximation of $\CGW$ is performed by alternatively updating $\al$ and $\Cc_\Co$ until the minimization attains a local minima
\begin{equation}\label{eq-cgw-discrete}
\min_{\al_{ijkl}} 
\left\{
\begin{aligned}
\sum_{i,j,k,l} \Cc_{ijkl}\al_{ijkl} \quad\textrm{s.t.}\quad
\sum_{j,k,l}(\tfrac{kR}{K})^2\al_{ijkl} = \mu_i \qandq \sum_{i,k,l}(\tfrac{lR}{L})^2\al_{ijkl} = \nu_j
\end{aligned}
\right\}.
\end{equation}

\paragraph{Details on the experiments of Section~\ref{sec-xp}.}
One can observe in the above procedure that the memory complexity of $\al$ and $\Cc_\Co$ is prohibitively high to use it in practice, due to the discretization of the radial coordinate which make the size of both tensors scaling as $O(NMKL)$ where $N,M$ are the number of samples in the spaces $(\Xx,\Yy)$.
Thus our experiment are performed considering Euclidean mm-spaces composed of samples $N,M\in\{2,3,5\}$, and we take $K=L=10$.
To guarantee as much as possible that we reach the global minima, we consider $10$ random initializations and $10$ random permutation matrices $P$ lifted as conic plan by setting $\al_{\cdot\cdot kl} = P$ for any $(k,l)$.
The latter initialization is assumed to be close to extremal points of the constraint polytope. Since Theorem~\ref{ThKonno'sGeneralization} holds for Euclidean mm-spaces, the optimal plan is also an extremal point of the polytope.
To compare $\CGW$ with $\UGW$, we set a solver with a level of entropy $\epsilon=10^{-3}$.
In Figure~\ref{fig:cgw-ugw-hist} we set $\rho=10^{-1}$.

\subsection{Details on PU learning experiments}

\paragraph{Details on training for PU learning tasks.}
We present the characteristics of the datasets in Table~\ref{table:data-info}. The variance of the accuracy results presented in Table~\ref{table:data-perf} is presented in Table~\ref{table:data-std}. The computations were made on an internal GPU cluster composed of 10 Tesla K80 and 3 Tesla P100. We also detail the parameters of the numerical solver computing UGW which is the core component of our numerical experiments.
\begin{itemize}
	\item The maximum number of iteration to update the plan is set to $3000$.
	\item The tolerance on convergence of $\pi$ in log-scale is set to $10^{-5}$, i.e. the algorithm stops when $\norm{\log\pi^{t+1} - \log\pi^{t} }_\infty < tol$. 
	\item The maximum number of iteration to update the Sinkhorn potentials is set to $3000$.
	\item The tolerance on convergence of $(\f, \g)$ is set to $10^{-6}$, i.e. the algorithm stops when $\norm{\f^{t+1} - \f^t}_\infty < tol$.
\end{itemize}

\begin{table}
	\begin{center}
		\begin{tabular}{ |c|c|c|c|c| } 
			\hline
			Dataset & \# of samples & \# of positives & Dim. & PCA Dim. \\ 
			\hline
			*-caltech & 1,123 & 151 & surf: 800 / decaf: 4096 & surf: 10 / decaf: 40 \\ 
			*-amazon & 958 & 92 & surf: 800 / decaf: 4096 & surf: 10 / decaf: 40 \\ 
			*-webcam & 295 & 29 & surf: 800 / decaf: 4096 & surf: 10 / decaf: 40 \\ 
			*-dslr & 157 & 12 & surf: 800 / decaf: 4096 & surf: 10 / decaf: 40 \\ 
			\hline
		\end{tabular}
	\end{center}
	\caption{Characteristics of datasets}
	\label{table:data-info}
\end{table}

\paragraph{Initialization for cross-domain tasks.}
To initialize UGW  when the features are different we propose to use a UOT solution of a matching between distance histograms which reads
\begin{equation}\label{eq-def-flb}
\FLB(\Xx,\Yy) \eqdef \min \int_{X\times Y} |\bar{\mu}\star d_X - \bar{\nu}\star d_Y|^2 \d\pi + \rho\KL(\pi_1|\mu) + \rho\KL(\pi_2|\nu) + \epsilon\KL(\pi|\mu\otimes\nu),
\end{equation}
where $\mu\star d_X(x) \eqdef \int d_X(x,x')\d\mu(x')$ is the eccentricity, i.e. a histogram of aggregated distances, and $\bar{\mu} = \mu / m(\mu)$. 
In~\citet{memoli2011gromov} this relaxation is refered as FLB and is a lower bound of GW, but in our unbalanced setting this program cannot a priori be compared with UGW.

\paragraph{Reducing the number of parameters.}

In Table~\ref{table:data-perf}, the accuracy for $\UGW$ is performed by selecting a pair of parameters $(\rho_1,\rho_2)$ for each task via a validation protocol detailed Section~\ref{sec-xp}.
It is desirable to reduce the number of parameters, to see if the performance does not significantly decrease, and avoid overparameterization of the task.
We propose in this section two strategies
The first case keeps one pair $(\rho_1,\rho_2)$ over all tasks.
The second case keeps a pair for each pair of domain tasks (i.e. surf$\leftrightarrow$surf, decaf$\leftrightarrow$decaf, surf$\leftrightarrow$decaf and decaf$\leftrightarrow$surf) for a total of 8 parameters, which allows to normalize adaptively each dataset via an adapted choice of parameters $(\rho_1,\rho_2)$.
The validation protocol is modified since we aggregate accuracies from different tasks.
The selected parameters are obtained by taking the highest mean excess accuracy over all tasks, where the excess is defined by comparing the accuracy to the case where we only predict false positives.
This measure of performance is computed on the validation folds, and we report the accuracy over the testing folds in Table~\ref{table:data-appendix}.

\begin{table}[]
	\begin{center}
		\resizebox{\textwidth}{!}{
			\begin{tabular}{|c|c|c|c|c|c|c|}
				\hline
				Dataset                      & prior & Init (PW) & PGW  & \textbf{UGW} & UGW (2 param.) & UGW (8 param.) \\ 
				\hline
				surf-C $\rightarrow$ surf-C  & 0.1   & \textbf{89.9} & 84.9 & 83.9 & 81.8 & 83.9 \\   
				surf-C $\rightarrow$ surf-A  & 0.1   & 81.8 & 82.2 & \textbf{83.5} & 83.1 & 83.3 \\   
				surf-C $\rightarrow$ surf-W  & 0.1   & \textbf{81.9} & 81.3 & 80.3 & 80.1 & 80.4 \\   
				surf-C $\rightarrow$ surf-D  & 0.1   & 80.0 & 81.4 & \textbf{83.2} & 80.3 & \textbf{83.2} \\  
				\hline
				surf-C $\rightarrow$ surf-C  & 0.2   & \textbf{79.7} & 75.7 & 75.4 & 67.5 & 75.4 \\   
				surf-C $\rightarrow$ surf-A  & 0.2   & 65.6 & 66.0 & \textbf{76.4} & 74.0 & 73.0 \\   
				surf-C $\rightarrow$ surf-W  & 0.2   & 65.1 & 64.3 & \textbf{67.3} & 63.8 & 64.9 \\   
				\hline
				decaf-C $\rightarrow$ decaf-C & 0.1   & \textbf{93.9} & 83.0 & 86.8 & 84.8 & 84.8 \\   
				decaf-C $\rightarrow$ decaf-A & 0.1   & 80.1 & 81.4 & \textbf{85.6} & 83.7 & 83.7 \\   
				decaf-C $\rightarrow$ decaf-W & 0.1   & 80.1 & 82.7 & \textbf{86.1} & 85.6 & 85.6 \\   
				decaf-C $\rightarrow$ decaf-D & 0.1   & 80.6 & \textbf{83.8} & 83.4 & 83.6 & 83.6 \\   
				\hline
				decaf-C $\rightarrow$ decaf-C & 0.2   & \textbf{90.6} & 76.7 & 80.5 & 75.7 & 75.7 \\   
				decaf-C $\rightarrow$ decaf-A & 0.2   & 62.5 & 68.7 & 74.7 & \textbf{75.0} & \textbf{75.0} \\   
				decaf-C $\rightarrow$ decaf-W & 0.2   & 65.7 & 75.9 & 79.2 & \textbf{80.2} & \textbf{80.2} \\   
				\hline\hline
				 Dataset                       & prior & Init (FLB) & PGW  & \textbf{UGW} & UGW (2 param.)& UGW (8 param.) \\
				 \hline
				 surf-C $\rightarrow$ decaf-C & 0.1   & 85.0 & 85.1 & \textbf{85.6} & 85.0 & 85.0  \\
				 surf-C $\rightarrow$ decaf-A & 0.1   & 84.2 & \textbf{87.1} & 83.6 & 83.5 & 83.5  \\
				 surf-C $\rightarrow$ decaf-W & 0.1   & 86.2 & \textbf{88.6} & 86.8 & 87.4 & 87.4  \\
				 surf-C $\rightarrow$ decaf-D & 0.1   & 84.7 & \textbf{91.1} & 90.7 & 89.3 & 89.3  \\
				\hline
				 surf-C $\rightarrow$ decaf-C & 0.2   & 74.8 & 75.6 & 75.9 & \textbf{76.2} & \textbf{76.2}   \\
				 surf-C $\rightarrow$ decaf-A & 0.2   & 76.2 & \textbf{87.9} & 82.4 & 83.2 & 83.2   \\
				 surf-C $\rightarrow$ decaf-W & 0.2   & 81.5 & 88.4 & \textbf{89.9} & 88.8 & 88.8   \\
				\hline
				 decaf-C $\rightarrow$ surf-C  & 0.1   & 81.7 & 81.0 & 81.1 & 81.9 & \textbf{82.1}   \\
				 decaf-C $\rightarrow$ surf-A  & 0.1   & 80.9 & 81.2 & \textbf{82.4} & 81.2 & 82.1  \\
				 decaf-C $\rightarrow$ surf-W  & 0.1   & 82.0 & 81.3 & \textbf{83.5} & 80.8 & 80.7   \\
				 decaf-C $\rightarrow$ surf-D  & 0.1   & 80.0 & 80.8 & \textbf{81.5} & 80.0 & \textbf{81.5}   \\
				\hline
				 decaf-C $\rightarrow$ surf-C  & 0.2   & 66.6 & 63.7 & 65.2 & 66.5 & \textbf{67.9}   \\
				 decaf-C $\rightarrow$ surf-A  & 0.2   & 62.9 & 62.4 & \textbf{69.3} & 62.2 & 68.5   \\
				 decaf-C $\rightarrow$ surf-W  & 0.2   & 65.1 & 61.4 & \textbf{83.3} & 61.1 & 65.0  \\
				 \hline
			\end{tabular}
		}
	\end{center}
	\caption{Accuracy for all tasks. The left block are domain adaptation experiments with similar features, where both PGW and UGW are initialised with PW. The right block are domain adaptation experiments with different features, and the reported init is FLB (see Appendix~\ref{sec-app-xp}) used for UGW.}
	\label {table:data-appendix}
\end{table}

\begin{table}[]
	\begin{center}
			\begin{tabular}{|c|c|c|c|c|c|c|}
				\hline
				Dataset                      & prior & Init (PW) & PGW  & \textbf{UGW} & UGW (2 param.) & UGW (8 param.) \\
				\hline
				surf-C $\rightarrow$ surf-C  & 0.1   & 2.05 & 1.95 & 2.93 & 2.14 & 2.94  \\
				surf-C $\rightarrow$ surf-A  & 0.1   & 1.25 & 1.89 & 2.14 & 2.29 & 3.33  \\ 
				surf-C $\rightarrow$ surf-W  & 0.1   & 1.33 & 1.82 & 0.73 & 0.45 & 0.82  \\ 
				surf-C $\rightarrow$ surf-D  & 0.1   & 0.00 & 1.69 & 2.63 & 0.73 & 2.63  \\ 
				\hline
				surf-C $\rightarrow$ surf-C  & 0.2   & 2.98 & 4.66 & 5.07 & 2.42 & 5.07  \\ 
				surf-C $\rightarrow$ surf-A  & 0.2   & 2.87 & 3.29 & 3.59 & 2.15 & 9.46  \\ 
				surf-C $\rightarrow$ surf-W  & 0.2   & 1.95 & 2.12 & 9.22 & 1.82 & 7.61  \\ 
				\hline
				decaf-C $\rightarrow$ decaf-C & 0.1   & 1.61 & 2.24 & 2.46 & 1.64 & 1.64 \\ 
				decaf-C $\rightarrow$ decaf-A & 0.1   & 0.44 & 1.91 & 4.52 & 2.08 & 2.08 \\ 
				decaf-C $\rightarrow$ decaf-W & 0.1   & 0.44 & 2.55 & 1.65 & 1.90 & 1.90  \\ 
				decaf-C $\rightarrow$ decaf-D & 0.1   & 0.92 & 1.54 & 2.06 & 1.67 & 1.67  \\ 
				\hline
				decaf-C $\rightarrow$ decaf-C & 0.2   & 2.54 & 3.59 & 5.73 & 2.53 & 2.54  \\ 
				decaf-C $\rightarrow$ decaf-A & 0.2   & 2.09 & 4.39 & 7.46 & 4.52 & 4.52 \\ 
				decaf-C $\rightarrow$ decaf-W & 0.2   & 1.93 & 3.60 & 5.89 & 3.61 & 3.61 \\ 
				\hline\hline
				Dataset                      & prior & Init (PW) & PGW  & \textbf{UGW} & UGW (2 param.) & UGW (8 param.) \\
				\hline
				surf-C $\rightarrow$ decaf-C & 0.1   & 2.79 & 2.64 & 3.01 & 2.71 & 2.71   \\
				surf-C $\rightarrow$ decaf-A & 0.1   & 2.08 & 6.50 & 3.28 & 2.82 & 2.82   \\
				surf-C $\rightarrow$ decaf-W & 0.1   & 1.89 & 5.63 & 3.97 & 3.62 & 3.62  \\
				surf-C $\rightarrow$ decaf-D & 0.1   & 1.93 & 8.09 & 7.09 & 7.46 & 7.46   \\
				\hline
				surf-C $\rightarrow$ decaf-C & 0.2   & 2.56 & 3.32 & 4.02 & 3.66 & 3.66   \\
				surf-C $\rightarrow$ decaf-A & 0.2   & 3.74 & 6.61 & 10.5 & 8.04 & 8.04   \\
				surf-C $\rightarrow$ decaf-W & 0.2   & 2.75 & 5.82 & 3.33 & 3.64 & 3.64   \\
				\hline
				decaf-C $\rightarrow$ surf-C  & 0.1   & 1.82 & 1.61 & 1.21 & 1.77 & 2.29  \\
				decaf-C $\rightarrow$ surf-A  & 0.1   & 1.18 & 1.94 & 2.11 & 1.36 & 2.10  \\
				decaf-C $\rightarrow$ surf-W  & 0.1   & 1.67 & 2.03 & 3.94 & 1.01 & 1.17   \\
				decaf-C $\rightarrow$ surf-D  & 0.1   & 0.00 & 1.60 & 1.70 & 0.00 & 1.70   \\
				\hline
				decaf-C $\rightarrow$ surf-C  & 0.2   & 3.04 & 2.92 & 7.21 & 3.24 & 4.08   \\
				decaf-C $\rightarrow$ surf-A  & 0.2   & 1.84 & 4.54 & 5.92 & 2.04 & 5.19   \\
				decaf-C $\rightarrow$ surf-W  & 0.2   & 2.86 & 3.23 & 6.43 & 1.52 & 3.76  \\
				\hline
			\end{tabular}
	\end{center}
	\caption{Standard deviation of accuracy for all tasks of Figure~\ref{table:data-perf}. The left block are domain adaptation experiments with similar features, where both PGW and UGW are initialised with PW. The right block are domain adaptation experiments with different features, and the reported init is FLB~\eqref{eq-def-flb} used for UGW.}
	\label {table:data-std}
\end{table}
\clearpage

\end{document}